\tikzset{
every node/.style={circle, draw, inner sep=2pt},
every picture/.style={thick}
}
\newtheorem{theorem}{Theorem}
\newtheorem{lemma}[theorem]{Lemma}
\newtheorem{proposition}[theorem]{Proposition}
\newtheorem{corollary}[theorem]{Corollary}
\theoremstyle{definition}
\newtheorem{definition}[theorem]{Definition}
\newtheorem{observation}[theorem]{Observation}
\newtheorem{remark}[theorem]{Remark}
\newtheorem{example}[theorem]{Example}
\newtheorem{question}[theorem]{Question}
\newtheorem{conjecture}[theorem]{Conjecture}
\newenvironment{thm}{\begin{theorem}}{\end{theorem}}
\newenvironment{lem}{\begin{lemma}}{\end{lemma}}
\newenvironment{prop}{\begin{proposition}}{\end{proposition}}
\newenvironment{cor}{\begin{corollary}}{\end{corollary}}
\newenvironment{defn}{\begin{definition}\bgroup\rm }{\egroup\end{definition}}
\newenvironment{rem}{\begin{remark}\bgroup\rm }{\egroup\end{remark}}
\newenvironment{ex}{\begin{example}\bgroup\rm }{\egroup\end{example}}
\def \mr {\operatorname{mr}}
\def \rank {\operatorname{rank}}
\def \S {\mathcal{S}}
\def \dunion {\dot{\cup}}
\newcommand{\diag}{\operatorname{diag}}
\newcommand{\lrangle}[1]{\left\langle#1\right\rangle}
\newcommand{\mrcr}{\mr^{\rm cr}}
\newcommand{\mz}{\operatorname{mz}}
\title{Critical ideals, minimum rank and zero forcing number}
\author{Carlos~A.~Alfaro \thanks{Banco de M\'exico, Mexico City, Mexico (alfaromontufar@gmail.com, carlos.alfaro@banxico.org.mx).}
\and Jephian~C.-H.~Lin \thanks{Department of Mathematics and Statistics, University of Victoria, Victoria, BC V8W 2Y2, Canada (chinhunglin@uvic.ca)}}
\begin{document}

\maketitle

\begin{abstract}
There are profound relations between the zero forcing number and the minimum rank of a graph.
We study the relation of both parameters with a third one, the algebraic co-rank; that is defined as the largest $i$ such that the $i$-th critical ideal is trivial.  This gives a new perspective for bounding and computing these three graph parameters.
\end{abstract}

\noindent
\textbf{Keywords:}
critical ideals, algebraic co-rank, forbidden induced subgraph, minimum rank, Laplacian matrix, zero forcing number.

\noindent
\textbf{MSC:} 
05C25, 05C50, 05E99, 13P15, 15A03, 68W30.

\section{Introduction}
Throughout the paper, we focus on simple graphs, except for Remark~\ref{digraphrem} and Theorem~\ref{digraph1}, which are results for digraphs.  Given a graph $G$ and a set of indeterminates $X_G=\{x_u \, : \, u\in V(G)\}$,
the {\it generalized Laplacian matrix} $L(G,X_G)$ of $G$ is the matrix whose $uv$-entry is given by
\[
L(G,X_G)_{uv}=\begin{cases}
x_u& \text{ if } u=v,\\
-m_{uv}& \text{ otherwise},
\end{cases}
\]
where $m_{uv}$ is the number of the edges between vertices $u$ and $v$.
Moreover, if $\mathcal{R}[X_G]$ is the polynomial ring over a commutative ring $\mathcal{R}$ with unity in the variables $X_G$, then the {\it critical ideals} of $G$ are the determinantal ideals given by
\[
I_i(G,X_G)=\langle {\rm minors}_i(L(G,X_G))\rangle\subseteq \mathcal{R}[X_G] \text{ for all } 1\leq i\leq n,
\]
where $n$ is the number of vertices of $G$ and ${\rm minors}_i(L(G,X_G))$ is the set of the determinants of the $i\times i$ submatrices of $L(G,X_G)$.

An ideal is said to be {\it trivial} if it is equal to $\langle1\rangle$ ($=\mathcal{R}[X]$).
The {\it algebraic co-rank} $\gamma_\mathcal{R}(G)$ of $G$ is the maximum integer $i$ for which $I_i(G,X_G)$ is trivial.
For simplicity, in the following, $\gamma(G)$ denote $\gamma_{\mathbb{R}}(G)$, where $\mathbb{R}$ is the field of real numbers.  
Note that $I_n(G,X_G)=\langle \det L(G,X_G)\rangle$ is always non-trivial, and if $d_G$ denote the degree vector, then $I_n(G,d_G)=\langle 0\rangle$.

Originally, critical ideals were defined as a generalization of the critical group, {\it a.k.a.}\/ sandpile group, see \cite{alfaval, alfacorrval, corrval}.
In \cite{alfaval2,merino} can be found an account of the main results on sandpile group.
However, it is also a generalization of several other algebraic objects like Smith group or characteristic polynomials of the adjacency and Laplacian matrices, see \cite[Section 4]{alfavalvaz} and \cite[Section 3.3]{corrval}.
Here, we explore the relations with the zero forcing number and the minimum rank.
For this, we recall these well-known concepts.

The \emph{zero forcing game} is a color-change game where vertices can be blue or white.
At the beginning, the player can pick a set of vertices $B$ and color them blue while others remain white.
The goal is to color all vertices blue through repeated applications of the \emph{color change rule}: If $x$ is a blue vertex and $y$ is the only white neighbor of $x$, then $y$ turns blue, denoted as $x\rightarrow y$.
An initial set of blue vertices $B$ is called a \emph{zero forcing set} if starting with $B$ one can make all vertices blue.
The \emph{zero forcing number} $Z(G)$ is the minimum cardinality of a zero forcing set.
The \emph{chronological list} of a zero forcing game records the forces $x_i\rightarrow y_i$ in the order of performance.

For a graph $G$ on $n$ vertices, the family $\S_\mathcal{R}(G)$ collects all $n\times n$ symmetric matrices with entries in the ring $\mathcal{R}$, whose $i,j$-entry ($i\neq j$) is nonzero whenever $i$ is adjacent to $j$ and zero otherwise.
Note that the diagonal entries can be any element in the ring $\mathcal{R}$.  
The \emph{minimum rank} $\mr_\mathcal{R}(G)$ of $G$ is the smallest possible rank among matrices in $\S_\mathcal{R}(G)$.
Here we follow \cite[Definition 1]{rankring} and define the rank of a matrix over a commutative ring with unity as the largest $k$ such that there is a nonzero $k\times k$ minor that is not a zero divisor.  In the case of $\mathcal{R}=\mathbb{Z}$, the rank over $\mathbb{Z}$ is the same as the rank over $\mathbb{R}$.
For simplicity, sometimes we will denote $\mr(G)=\mr_\mathbb{R}(G)$ and $\S(G)=\S_\mathbb{R}(G)$.  

The paper is organized as follows.
Let $\mz(G)=|V(G)|-Z(G)$.
It is known \cite{AIMZmr,cancun} that $\mz(G)\leq \mr_{\mathcal{F}}(G)$ for every graph $G$ and any field $\mathcal{F}$, where $G$ can be a simple graph or a digraph.  In Section~\ref{section:mzgamma}, we extend this relation by proving that $\mz(G)\leq \gamma_\mathcal{R}(G)$.
In general, the algebraic co-rank and the minimum rank are not comparable.
However, in Section~\ref{section:mrgamma}, we explore the relation between the minimum rank and the algebraic co-rank under several rings.
As byproduct of the Weak Nullstellensatz, we conclude that when $\mathcal{R}$ is an algebraically closed field, $\mr_\mathcal{R}(G)\leq\gamma_\mathcal{R}(G)$.
We also conjecture that $\mr_\mathbb{R}(G)\leq\gamma_\mathbb{R}(G)$.
It is also known that $\mz(T)=\mr(T)$ for any tree $T$.
In Section~\ref{section:mzmrgamma}, we complement this equation by proving that, for trees, $\mz(T)$ and $\mr(T)$ are also equal to $\gamma(G)$; similar equalities are provided for several families of graphs.  
Finally, in Section~\ref{section:graphclasses} we discuss the property that the algebraic co-rank, minimum rank and $\mz$ are monotone on induced subgraphs, and extend some classifications.

\section{Zero forcing number and algebraic co-rank}\label{section:mzgamma}
In \cite{AIMZmr} it was proved that $\mz(G)$ is bounded from above by $\mr_{\mathcal{F}}(G)$ for every graph $G$ and any field $\mathcal{F}$.
We extend this result by proving that $\mz(G)$ is also bounded by the algebraic co-rank.
\begin{thm}
\label{gammaZthm}
For every graph $G$, $\mz(G)\leq \gamma_\mathcal{R}(G)$ for any commutative ring $\mathcal{R}$ with unity.
\end{thm}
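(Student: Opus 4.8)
The plan is to exhibit, for a given zero forcing set $B$ of size $Z(G)$, an explicit assignment of values to the indeterminates $X_G$ that makes a $\mz(G)\times\mz(G)$ minor of $L(G,X_G)$ equal to $\pm 1$; since critical ideals are monotone (specializing the indeterminates can only shrink the ideal, and $\langle 1\rangle$ cannot shrink), this forces $I_i(G,X_G)$ to be trivial for $i=\mz(G)$, hence $\gamma_\mathcal{R}(G)\geq\mz(G)$. Thus the whole argument reduces to a combinatorial construction driven by a chronological list of forces.

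First I would fix a zero forcing set $B$ with $|B|=Z(G)$ and run the forcing process, recording the chronological list of forces $x_1\to y_1,\dots,x_k\to y_k$ where $k=n-Z(G)=\mz(G)$. The white-turned-blue vertices $y_1,\dots,y_k$ are all distinct and exhaust $V(G)\setminus B$. The key structural fact is the standard one used in the $\mz\le\mr$ proofs: when $x_i$ forces $y_i$, at that moment every neighbor of $x_i$ other than $y_i$ is already blue, i.e.\ lies in $B\cup\{y_1,\dots,y_{i-1}\}$. I would then consider the $k\times k$ submatrix $M$ of $L(G,X_G)$ whose rows are indexed by $x_1,\dots,x_k$ and whose columns are indexed by $y_1,\dots,y_k$ (note the $x_i$ need not be distinct, but one checks in the standard way that the chosen rows can be taken distinct, or more cleanly one indexes rows by $\{y_1,\dots,y_k\}$ as well and uses a different submatrix — I will use the ``forcing'' submatrix that appears in \cite{AIMZmr}). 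The entry $M_{x_i,y_j}$ is $-m_{x_iy_j}$ when $x_i\neq y_j$ and $x_{y_i}$ when $x_i=y_j$.

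The crucial observation is that $M$ can be made triangular by the forcing order: $M_{x_i,y_j}=0$ whenever $y_j$ is white at the time of the $i$-th force and $y_j\neq y_i$, because the only white neighbor of $x_i$ at that step is $y_i$; since $y_j$ with $j>i$ is still white at step $i$, we get $M_{x_i,y_j}=0$ for all $j>i$ (after possibly reordering so the diagonal position corresponds to the force). So with respect to the ordering given by the chronological list, $M$ is lower triangular, and its diagonal entry in position $i$ is $-m_{x_iy_i}=-1\neq 0$ if $x_i\neq y_i$ (which always holds, a vertex never forces itself). Therefore $\det M=\pm 1$ identically in $X_G$ — in fact this minor does not involve any indeterminate at all — so it is a unit in $\mathcal{R}[X_G]$, which shows $I_k(G,X_G)=\langle 1\rangle$ and hence $\gamma_\mathcal{R}(G)\geq k=\mz(G)$.

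The main obstacle, and the step deserving the most care, is the bookkeeping that the chosen rows and columns genuinely form a $k\times k$ submatrix with the stated triangular structure: one must argue that although a vertex $x$ may perform several forces, one can select the row/column index sets so that the diagonal positions correspond exactly to the forces and all entries strictly above the diagonal vanish by the ``only white neighbor'' property, while entries below the diagonal are irrelevant to the determinant being a unit. This is precisely the combinatorial lemma underpinning $\mz(G)\le\mr_\mathcal{F}(G)$ in \cite{AIMZmr,cancun}, and the only new point here is the trivial one that the construction produces a minor equal to $\pm 1$ with no field division needed, so it works verbatim over an arbitrary commutative ring with unity; no Nullstellensatz or rank-over-a-ring subtlety is required for this direction.
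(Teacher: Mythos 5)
Your proof is correct and is essentially the paper's own argument: the submatrix of $L(G,X_G)$ whose rows are the forcing vertices and whose columns are the forced vertices, ordered by the chronological list, is lower triangular with $-1$ on each diagonal entry, so its determinant is the constant $\pm 1$ and $I_{\mz(G)}(G,X_G)$ is trivial over any commutative ring with unity. Two small cautions: the framing in your opening paragraph (assign values to the indeterminates, obtain a unit minor, conclude the polynomial ideal is trivial) is not a valid inference in general, since evaluation can turn a non-trivial critical ideal into $\langle 1\rangle$ (e.g.\ $I_2(K_2,X_{K_2})=\langle x_1x_2-1\rangle$ evaluated at $(2,1)$), but your argument never actually uses it because the minor you exhibit is already the constant $\pm 1$ in $\mathcal{R}[X_G]$; and the distinctness of the forcing vertices, which you hedge on, is automatic, since once a vertex performs a force all of its neighbors are blue and it can never force again.
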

\begin{proof}
Suppose $|V(G)|=n$ and $\mz(G)=k$.
Let $B$ be a zero forcing set of $G$ of cardinality $n-k$.
Let $(a_i\rightarrow b_i)_{i=1}^k$ be a chronological list.
Set $\alpha=\{a_i\}_{i=1}^k$ and $\beta=\{b_i\}_{i=1}^k$.
Let $L(G,X_G)$ be the generalized Laplacian matrix of $G$.
Let $A$ be the submatrix of $L(G,X_G)$ induced on rows $\alpha$ and columns $\beta$.
Obtain $A'$ from $A$ such that the order of rows corresponds with $a_1,\ldots, a_k$ and the order of columns corresponds with $b_1,\ldots,b_k$.
At step $t$ when $a_t\rightarrow b_t$ is about to happen, vertices $a_1,\dots,a_{t}$ are blue and vertices $b_{t},\dots,b_{k}$ are white.
Then, $a_t$ is adjacent with $b_{t}$ and is not adjacent with any of vertices $b_{t+2},\dots,b_{k}$.
Therefore, $A'$ is a lower triangular matrix with $-1$ on all diagonal entries.
Therefore, $A$ is an $k\times k$ submatrix of $L(G,X_G)$ with $\det(A)=\pm 1$.
Consequently, $\mz(G)\leq\gamma_\mathcal{R}(G)$.
\end{proof}

\begin{rem}
\label{digraphrem}
The zero forcing number $Z(D)$ and the minimum rank $\mr(D)$ of a simple digraph $D$ (which means no loops are allowed) are defined in \cite{cancun} and showed to have $|V(G)|-Z(G)\leq \mr(D)$ for all digraph $D$.  On the other hand, the critical ideals and the algebraic co-rank $\gamma_\mathcal{R}(D)$ of a digraph are defined in \cite{corrval}.  Theorem~\ref{gammaZthm} can be extended for digraphs.  That is, 
\[\mz(D)=|V(D)|-Z(D)\leq \gamma_\mathcal{R}(D)\]
for any commutative ring $\mathcal{R}$ with unity.
\end{rem}

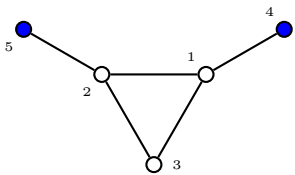
\begin{figure}[h]
\begin{center}
\begin{tikzpicture}
\foreach \i/\ang in {1/30,2/150,3/270}{
\pgfmathsetmacro{\lang}{\ang+90}
\node[label={\lang:\tiny\i}] (\i) at (\ang:0.8) {};
}
\foreach \i/\ang in {4/30,5/150}{
\pgfmathsetmacro{\lang}{\ang+90}
\node[fill=blue,label={\lang:\tiny\i}] (\i) at (\ang:2) {};
}
\draw (1) -- (2) -- (3) -- (1);
\draw (1) -- (4);
\draw (2) -- (5);
\end{tikzpicture}
\end{center}
\caption{A graph with its zero forcing set}
\label{bull}
\end{figure}

\begin{ex}
Let $G$ be the graph shown in Figure~\ref{bull}, the vertices marked as blue form a zero forcing set, and 
\[4\rightarrow 1,\qquad 5\rightarrow 2, \qquad 2\rightarrow 3\]
form a chronological list.  We have 
\[L(G,X_G)=\begin{bmatrix}
x_1 & -1 & -1 & -1 & 0 \\
-1 & x_2 & -1 & 0 & -1 \\
-1 & -1 & x_3 & 0 & 0 \\
-1 & 0 & 0 & x_4 & 0 \\
0 & -1 & 0 & 0 & x_5 \\
\end{bmatrix}.\]
If we write down the submatrix of $L(G,X_G)$ in the order of rows $(4,5,2)$ and columns $(1,2,3)$, then we get 
\[A'=\begin{bmatrix}
-1 & 0 & 0 \\
0 & -1 & 0 \\
-1 & x_2 & -1 \\
\end{bmatrix},\]
which is a lower triangular matrix with $-1$ on each diagonal entries.  Therefore, $I_1(G,X_G)=I_2(G,X_G)=I_3(G,X_G)=\lrangle{1}$ and $\gamma_{\mathcal{R}}(G)\geq 3=\mz(G)$.
\end{ex}

The main idea behind proof of Theorem \ref{gammaZthm} is to associate each zero forcing set of cardinality $k$ with a $k\times k$ submatrix of the generalized Laplacian matrix with determinant $\pm 1$.
As pointed out in Example 5.6 of \cite{alfaval}, there are graphs with algebraic co-rank $k$ having no $k$-minor equal to $\pm 1$.
Therefore, there are graph in which $\mz(G)<\gamma_\mathcal{R}(G)$.

\section{Minimum rank and algebraic co-rank}\label{section:mrgamma}

Let $I\subseteq \mathcal{R}[X]$ be an ideal in $\mathcal{R}[X]$.
The \emph{variety} of $I$ is defined as
\[
V_\mathcal{R}(I)=\left\{ {\bf a}\in \mathcal{R}^n : f({\bf a}) = 0 \text{ for all } f\in I \right\}.
\]
That is, $V_\mathcal{R}(I)$ is the set of common roots between polynomials in $I$.
It is known \cite[Proposition 4]{clo} that if $f_1, \ldots , f_s$ and $g_1, \ldots , g_t$ are two different bases of the same ideal $I$, then $V_\mathcal{R}(f_1, \ldots , f_s)=V_\mathcal{R}(g_1, \ldots , g_t)=V_\mathcal{R}(I)$.
Also, if $I$ is trivial, then $V_\mathcal{R}(I)=\emptyset$.
In terms of the critical ideals, if $I_k(G,X_G)\subseteq \mathcal{R}[X_G]$ is trivial, then, for all ${\bf a}\in \mathcal{R}^n$, there are $k$-minors of $L(G,{\bf a})$ which are different of 0, and $\rank(L(G,{\bf a}))\geq k$.
However, $\gamma_\mathcal{R}(G)=k$ does not imply that $\mr_\mathcal{R}(G)\geq k$, since matrices in $\S_\mathcal{R}(G)$ do not necessarily have only $0$ and $-1$ on the off-diagonal entries.
One property of the critical ideals \cite[Proposition 3.3]{corrval} is that
\[
\langle 1\rangle \supseteq I_1(G,X_G) \supseteq \cdots \supseteq I_n(G,X_G) \supseteq \langle 0\rangle.
\]
Thus
\[
\emptyset=V_\mathcal{R}(\langle 1\rangle) \subseteq V_\mathcal{R}(I_1(G,X_G)) \subseteq \cdots \subseteq V_\mathcal{R}(I_n(G,X_G)) \subseteq V_\mathcal{R}(\langle 0\rangle)=\mathcal{R}^n.
\]
If $V_\mathcal{R}(I_k(G,X_G))\neq\emptyset$ for some $k$, then there exists ${\bf a}\in\mathcal{R}$ such that, for all $t \geq k$, $I_{t}(G,{\bf a})=\langle 0\rangle$; that is, all $t$-minors of $L(G,{\bf a})$ are equal to $0$.
Therefore, $\mr_\mathcal{R}(G)\leq k-1$.
\begin{lem}\label{lemma:zerosetmr}
If $V_\mathcal{R}\left(I_{r+1}(G,X_G)\right)\neq\emptyset$, then $\mr_\mathcal{R}(G)\leq r$.  In particular, if $r = \gamma_\mathcal{R}(G)$ and $V_\mathcal{R}\left(I_{r+1}(G,X_G)\right)\neq\emptyset$, then $\mr_\mathcal{R}(G)\leq \gamma_\mathcal{R}(G)$.
\end{lem}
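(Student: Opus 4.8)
The plan is to derive this essentially from the discussion immediately preceding the statement, which has already assembled all the pieces. First I would recall the chain of ideal containments $\langle 1\rangle \supseteq I_1(G,X_G) \supseteq \cdots \supseteq I_n(G,X_G)$, which gives the reversed chain of varieties $V_\mathcal{R}(I_1(G,X_G)) \subseteq \cdots \subseteq V_\mathcal{R}(I_n(G,X_G))$. Under the hypothesis that $V_\mathcal{R}(I_{r+1}(G,X_G))\neq\emptyset$, pick a point ${\bf a}$ in that variety. By the monotonicity of varieties, ${\bf a}\in V_\mathcal{R}(I_t(G,X_G))$ for every $t\geq r+1$, so every generator of $I_t(G,X_G)$ vanishes at ${\bf a}$; that is, every $t\times t$ minor of $L(G,{\bf a})$ is zero for all $t\geq r+1$.

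Next I would observe that $L(G,{\bf a})$ is a member of $\S_\mathcal{R}(G)$: it is symmetric (since $G$ is a simple graph), its off-diagonal $uv$-entry is $-m_{uv}$, which is nonzero exactly when $u\sim v$, and its diagonal entries $a_u$ are arbitrary ring elements, which is allowed. Since all minors of size $\geq r+1$ vanish, the rank of $L(G,{\bf a})$ over $\mathcal{R}$ (in the sense defined in the introduction, the largest $k$ with a nonzero $k\times k$ minor that is not a zero divisor) is at most $r$. Hence $\mr_\mathcal{R}(G)\leq \rank(L(G,{\bf a}))\leq r$, proving the first assertion. For the second assertion, simply specialize $r=\gamma_\mathcal{R}(G)$: by definition of the algebraic co-rank, $I_{\gamma_\mathcal{R}(G)+1}(G,X_G)$ is not trivial, and combined with the stated hypothesis that its variety is nonempty, the first part yields $\mr_\mathcal{R}(G)\leq \gamma_\mathcal{R}(G)$.

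I do not anticipate a serious obstacle here; the lemma is really just a repackaging of the paragraph preceding it into a clean statement. The only point requiring mild care is making sure the notion of rank over a commutative ring behaves as expected — namely that "all $k\times k$ minors are zero" genuinely forces rank $< k$ under the paper's definition, which is immediate since that definition asks for a nonzero minor (indeed one that is not a zero divisor). A second small point is noting that $L(G,{\bf a})$ actually lies in $\S_\mathcal{R}(G)$ rather than some smaller class, so that it is a legitimate competitor in the minimization defining $\mr_\mathcal{R}(G)$; this is transparent from the definition of the generalized Laplacian. With those two observations recorded, the proof is a two-line argument.
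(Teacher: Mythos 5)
Your proposal is correct and follows essentially the same route as the paper, which proves this lemma via the discussion immediately preceding it: the nesting of the critical ideals gives the reverse nesting of their varieties, a point ${\bf a}$ in $V_\mathcal{R}(I_{r+1}(G,X_G))$ kills all $t$-minors of $L(G,{\bf a})$ for $t\geq r+1$, and $L(G,{\bf a})\in \S_\mathcal{R}(G)$ forces $\mr_\mathcal{R}(G)\leq r$. Your two added remarks (that vanishing of all $(r+1)$-minors bounds the rank under the paper's definition of rank over a ring, and that $L(G,{\bf a})$ is a legitimate member of $\S_\mathcal{R}(G)$) are exactly the right points to make explicit.
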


\begin{example}\label{example:cimr}
Figure~\ref{figureexamplemrcr} shows the graph $\overline{3K_2}$, which has $\gamma_\mathbb{Z}\left(\overline{3K_2}\right)=2$ and $\gamma_\mathbb{R}\left(\overline{3K_2}\right)=3$.
The Gr\"obner bases of its first non-trivial critical ideals in these rings are:
\[
I_3\left(\overline{3K_2},X_{\overline{3K_2}}\right)=\langle x_0, x_1, x_2, x_3, x_4, x_5, 2 \rangle\subseteq \mathbb{Z}\left[X_{\overline{3K_2}}\right]
\]
\begin{eqnarray*}
I_4\left(\overline{3K_2},X_{\overline{3K_2}}\right) & = & \langle x_0x_1, x_0x_2, x_0x_3 + 2x_0 + 2x_3, x_0x_4, x_0x_5, x_1x_2, \\
& & x_1x_3, x_1x_4, x_1x_5 + 2x_1 + 2x_5, x_2x_3, x_2x_4 + 2x_2 + 2x_4,\\
& & x_2x_5, x_3x_4, x_3x_5, x_4x_5 \rangle\subseteq \mathbb{R}\left[X_{\overline{3K_2}}\right]
\end{eqnarray*}
For this graph, we have that $V_\mathbb{Z}\left(I_3\left(\overline{3K_2},X_{\overline{3K_2}}\right)\right)$ is empty, that is, there is no ${\bf a}\in \mathbb{Z}^6$ such that $I_3\left(\overline{3K_2},{\bf a}\right)=\langle 0\rangle$.
On the other hand, $I_4\left(\overline{3K_2},{\bf 0}\right)=\langle 0\rangle\subseteq \mathbb{R}\left[X_{\overline{3K_2}}\right]$.
Meanwhile, $Z\left(\overline{3K_2}\right)=4$, so $\mr_\mathbb{R}\left(\overline{3K_2}\right)=\mz\left(\overline{3K_2}\right)=2$.

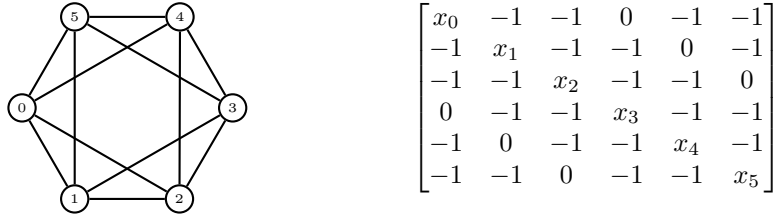
\begin{figure}[h!]
    \begin{center}
    \begin{multicols}{2}
		\begin{tikzpicture}[scale=0.7,thick]
		\tikzstyle{every node}=[minimum width=0pt, inner sep=2pt, circle]
			\draw (-2.0,0.0) node[draw] (0) { \tiny 0};
			\draw (-1.0,-1.73) node[draw] (2) { \tiny 1};
			\draw (0.99,-1.73) node[draw] (1) { \tiny 2};
			\draw (2.0,0.0) node[draw] (3) { \tiny 3};
			\draw (1.0,1.73) node[draw] (4) { \tiny 4};
			\draw (-1.0,1.73) node[draw] (5) { \tiny 5};
			\draw  (0) edge (1);
			\draw  (0) edge (2);
			\draw  (0) edge (4);
			\draw  (0) edge (5);
			\draw  (1) edge (2);
			\draw  (1) edge (3);
			\draw  (1) edge (4);
			\draw  (2) edge (3);
			\draw  (2) edge (5);
			\draw  (3) edge (4);
			\draw  (3) edge (5);
			\draw  (4) edge (5);
		\end{tikzpicture}

        $
        \begin{bmatrix}
            x_0 & -1 & -1 & 0 & -1 & -1 \\
            -1 & x_1 & -1 & -1 & 0 & -1 \\
            -1 & -1 & x_2 & -1 & -1 & 0 \\
            0 & -1 & -1 & x_3 & -1 & -1 \\
            -1 & 0 & -1 & -1 & x_4 & -1 \\
            -1 & -1 & 0 & -1 & -1 & x_5 \\
        \end{bmatrix}
        $
    \end{multicols}
	\end{center}
    \caption{The graph $\overline{3K_2}$ and its generalized Laplacian matrix}
    \label{figureexamplemrcr}
\end{figure}

\end{example}

In the case of algebraically closed fields, that is, fields where every non-constant polynomial in $\mathcal{R}[X]$ has a root in $\mathcal{R}$, minimum rank can be bounded from above by the algebraic co-rank.
This is a consequence of the following outstanding result.

\begin{lem}{\rm \cite[The Weak Nullstellensatz]{clo}}
    Let $\mathcal{R}$ be an algebraically closed field and let $I\subseteq \mathcal{R}[X]$ be an ideal satisfying $V(I)=\emptyset$.
    Then $I$ is trivial.
\end{lem}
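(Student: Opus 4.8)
The plan is to derive the statement from \emph{Zariski's lemma}: if a field $K$ is finitely generated as an algebra over a field $k$, then $K$ is a finite field extension of $k$. Granting this, the argument is short. Suppose for contradiction that $I$ is a proper ideal. By Zorn's lemma $I$ is contained in a maximal ideal $\mathfrak{m}$, so $K:=\mathcal{R}[X]/\mathfrak{m}$ is a field, generated as an $\mathcal{R}$-algebra by the residues $\bar x_1,\dots,\bar x_n$ of the variables. Zariski's lemma makes $K$ algebraic over $\mathcal{R}$, and since $\mathcal{R}$ is algebraically closed the inclusion $\mathcal{R}\hookrightarrow K$ must be an isomorphism. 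Hence the quotient map $\pi\colon\mathcal{R}[X]\to K\cong\mathcal{R}$ sends each $x_i$ to some $a_i\in\mathcal{R}$; setting ${\bf a}=(a_1,\dots,a_n)$, every $f\in I\subseteq\mathfrak{m}$ satisfies $f({\bf a})=\pi(f)=0$, so ${\bf a}\in V(I)$, contradicting $V(I)=\emptyset$. The contrapositive is exactly the claim: $V(I)=\emptyset$ forces $I=\langle 1\rangle$.

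The real content is Zariski's lemma, which I would prove by induction on the number $n$ of algebra generators of $K=k[a_1,\dots,a_n]$. If $n=1$ and $a_1$ were transcendental over $k$, then $k[a_1]\cong k[t]$ would not be a field; hence $a_1$ is algebraic and $k[a_1]=k(a_1)$ is finite over $k$. For $n>1$, put $k_1=k(a_1)\subseteq K$; then $K=k_1[a_2,\dots,a_n]$ is a field with $n-1$ algebra generators over $k_1$, so by induction $K$ is finite over $k_1$. It remains to see that $a_1$ is algebraic over $k$. If not, $k[a_1]\cong k[t]$ is a PID with infinitely many primes. Because $K$ is finite over $k_1$, each $a_i$ with $i\ge 2$ satisfies a monic equation over $k_1=k(a_1)$; clearing denominators yields a single nonzero $d\in k[a_1]$ such that every $a_i$ is integral over the localization $k[a_1][1/d]$, whence $K$, and in particular $k(a_1)\subseteq K$, is integral over $k[a_1][1/d]$. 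Pick a prime $p\in k[a_1]$ with $p\nmid d$. Then $1/p$ is integral over $k[a_1][1/d]$, and clearing denominators in its monic relation gives $d^N\in p\cdot k[a_1]$ for some $N$, i.e.\ $p\mid d^N$ — contradicting the choice of $p$. Hence $a_1$ is algebraic, $k_1/k$ is finite, and by the tower law $K/k$ is finite, closing the induction.

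The main obstacle is the inductive step of Zariski's lemma: the bookkeeping to produce the common denominator $d$ and to extract the divisibility $p\mid d^N$, together with the input that $k[t]$ has infinitely many irreducibles (Euclid's argument). An alternative route is via Noether normalization — every finitely generated $k$-algebra is a finite module over a polynomial subalgebra $k[y_1,\dots,y_m]$ with the $y_j$ algebraically independent — applied to the field $K$, where finiteness over $k[y_1,\dots,y_m]$ forces $m=0$. I would favour the direct induction, since it requires nothing about integral extensions beyond the tower law and the closure of integrality under finite generation, and it keeps the whole proof self-contained.
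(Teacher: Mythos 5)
Your proof is correct, but note that the paper does not prove this statement at all: it is quoted verbatim as the Weak Nullstellensatz with a citation to Cox--Little--O'Shea, and is used as a black box to deduce Theorem~\ref{thm:closedringmrleqgamma}. What you have written is a complete, self-contained replacement for that citation. The reduction to Zariski's lemma is the standard one (a proper ideal sits inside a maximal ideal $\mathfrak{m}$; the residue field is a finitely generated $\mathcal{R}$-algebra, hence algebraic over $\mathcal{R}$, hence equal to $\mathcal{R}$ by algebraic closedness; the resulting evaluation point lies in $V(I)$), and your inductive proof of Zariski's lemma is the Artin--Tate-style argument: the common denominator $d$, the infinitude of irreducibles in $k[t]$, and the divisibility $p\mid d^{N}$ extracted from the integrality of $1/p$ over $k[a_1][1/d]$ all check out. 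The only caveats are cosmetic: since $\mathcal{R}[X]$ is Noetherian you do not need Zorn's lemma to find $\mathfrak{m}$, and when you assert that $K$ is integral over $k[a_1][1/d]$ you are implicitly using that the integral elements form a ring, which you do acknowledge at the end. The trade-off between the two approaches is simply self-containedness versus brevity; for the purposes of this paper the citation suffices, but your argument would stand on its own if one wanted the paper to be independent of \cite{clo}.
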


In general, the same result for any arbitrary ring is not always true.
For instance, consider the ideal $I_3\left(\overline{3K_2},X_{\overline{3K_2}}\right)\subseteq \mathbb{Z}\left[X_{\overline{3K_2}}\right]$ in Example \ref{example:cimr}.
In this case, $V_\mathbb{Z}\left(I_3\left(\overline{3K_2},X_{\overline{3K_2}}\right)\right)$ is empty and $I_3\left(\overline{3K_2},X_{\overline{3K_2}}\right)$ is non-trivial.
However, $V_\mathbb{R}(I_4\left(\overline{3K_2},X_{\overline{3K_2}}\right))$ is not empty, in fact, $\mr_\mathbb{R}(\overline{3K_2})\leq\gamma_\mathbb{R}(\overline{3K_2})$.

Let $r = \gamma_\mathcal{R}(G)$.
If $\mathcal{R}$ is an algebraically closed field, then by the weak nullstellensatz, $V(I_{r+1}(G,X_G))$ is not empty.
And by Lemma~\ref{lemma:zerosetmr}, next result follows.

\begin{thm}\label{thm:closedringmrleqgamma}
Let $\mathcal{R}$ be an algebraically closed field.  Then $\mr_\mathcal{R}(G)\leq\gamma_\mathcal{R}(G)$ for every graph $G$.
\end{thm}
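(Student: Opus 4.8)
The statement follows almost immediately by stringing together the three ingredients already assembled in the excerpt: the chain inclusion of critical ideals, the Weak Nullstellensatz, and Lemma~\ref{lemma:zerosetmr}. So the proof is really a one-paragraph assembly argument, and the only thing requiring care is a boundary/degenerate case.

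First I would set $r=\gamma_\mathcal{R}(G)$ and recall that by definition of the algebraic co-rank, $I_r(G,X_G)=\langle 1\rangle$ is trivial but $I_{r+1}(G,X_G)$ is non-trivial (here I need $r<n$; if $r=n$ then $I_n=\langle\det L(G,X_G)\rangle$ is trivial, which forces $\mr_\mathcal{R}(G)\le n=\gamma_\mathcal{R}(G)$ trivially, so that case is handled separately and quickly). Next, since $\mathcal{R}$ is an algebraically closed field, I apply the Weak Nullstellensatz in contrapositive form: a non-trivial ideal must have non-empty variety, hence $V_\mathcal{R}(I_{r+1}(G,X_G))\neq\emptyset$. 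Finally I feed this into Lemma~\ref{lemma:zerosetmr}, whose hypothesis is exactly "$V_\mathcal{R}(I_{r+1}(G,X_G))\neq\emptyset$", to conclude $\mr_\mathcal{R}(G)\le r=\gamma_\mathcal{R}(G)$.

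For completeness I would also unpack \emph{why} Lemma~\ref{lemma:zerosetmr} does the job, since the excerpt states it just before: if $\mathbf{a}\in V_\mathcal{R}(I_{r+1}(G,X_G))$, then every $(r+1)\times(r+1)$ minor of the specific matrix $L(G,\mathbf{a})\in\S_\mathcal{R}(G)$ vanishes; by the monotonicity chain $I_{r+1}\supseteq I_{r+2}\supseteq\cdots\supseteq I_n$ of \cite[Proposition 3.3]{corrval}, every larger minor of $L(G,\mathbf{a})$ vanishes too, so $\rank L(G,\mathbf{a})\le r$, and $\mr_\mathcal{R}(G)$, being a minimum over $\S_\mathcal{R}(G)$, is at most $r$.

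\textbf{Main obstacle.} There is essentially no hard step; the arguments are already present in the excerpt and this theorem just glues them. The one subtlety worth flagging is the edge case $\gamma_\mathcal{R}(G)=n$ (equivalently $\det L(G,X_G)$ being a unit, which cannot happen over a field since $\det L(G,d_G)=0$ shows it is not a nonzero constant, so in fact $\gamma_\mathcal{R}(G)\le n-1$ always over a field) — noting this rules out any degenerate failure of the index $r+1$ being a legitimate size of a submatrix, so Lemma~\ref{lemma:zerosetmr} applies cleanly.
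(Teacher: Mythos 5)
Your proposal is correct and follows essentially the same route as the paper: set $r=\gamma_\mathcal{R}(G)$, use the Weak Nullstellensatz in contrapositive form to get $V_\mathcal{R}(I_{r+1}(G,X_G))\neq\emptyset$, and conclude via Lemma~\ref{lemma:zerosetmr}. The edge case you flag never arises, since the paper already notes that $I_n(G,X_G)=\langle\det L(G,X_G)\rangle$ is always non-trivial (because $I_n(G,d_G)=\langle 0\rangle$), so $\gamma_\mathcal{R}(G)\leq n-1$ and $I_{r+1}$ is always a legitimate non-trivial ideal.
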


The minimum rank problem have many variants, one is that the diagonal entries might have some restrictions.
For example, a variant of the $\mr_{\mathbb{C}}$ considers only Hermitian matrices, so all the diagonal entries must be real.
However, if no restrictions exists on the diagonal entries, then Theorem~\ref{thm:closedringmrleqgamma} implies $\mr_{\mathbb{C}}(G)\leq\gamma_\mathbb{C}(G)$.  In general, it is not clear the relation between $\gamma_\mathcal{R}(G)$ and $\mr_\mathcal{R}(G)$ for any arbitrary ring $\mathcal{R}$ and graph $G$.  For $\mathcal{R}=\mathbb{R}$, we have the following conjecture.


\begin{conjecture}\label{conjecture:mrleqgamma}
For any graph $G$, $\mr_\mathbb{R}(G)\leq\gamma_\mathbb{R}(G)$.
\end{conjecture}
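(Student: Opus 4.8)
The plan is to reduce the conjecture, via Lemma~\ref{lemma:zerosetmr}, to the assertion that the real variety $V_\mathbb{R}(I_{r+1}(G,X_G))$ is non-empty, where $r=\gamma_\mathbb{R}(G)$: a real point $\mathbf{a}$ in that variety gives a matrix $L(G,\mathbf{a})\in\S_\mathbb{R}(G)$ of rank at most $r$, hence $\mr_\mathbb{R}(G)\le r$. This would in fact prove the stronger statement that the minimum rank taken only over generalized Laplacian matrices of $G$ equals $\gamma_\mathbb{R}(G)$; should that turn out to be too strong, one must instead exhibit a rank-$\le r$ matrix in $\S_\mathbb{R}(G)$ that need not be a generalized Laplacian, which is the fallback discussed at the end.

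A convenient first step is to observe that triviality of a polynomial ideal with coefficients in $\mathbb{Q}$ is unaffected by enlarging the ground field in characteristic zero (being trivial means $1$ belongs to the ideal, i.e.\ a certain linear system with rational coefficients is solvable, and that is insensitive to field extension). Consequently $\gamma_\mathbb{R}(G)=\gamma_\mathbb{Q}(G)=\gamma_\mathbb{C}(G)=:r$, and Theorem~\ref{thm:closedringmrleqgamma}, applied over $\mathbb{C}$, already yields $V_\mathbb{C}(I_{r+1}(G,X_G))\neq\emptyset$ together with $\mr_\mathbb{C}(G)\le r$. Thus the conjecture amounts to showing that the complex variety $V_\mathbb{C}(I_{r+1}(G,X_G))$ — which is cut out by real polynomials and is stable under complex conjugation — contains a real point (or, more weakly, that the minimum rank over $\mathbb{C}$ within this structured matrix family is already attained over $\mathbb{R}$).

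To produce such a real point I would pursue two complementary lines. The analytic line: for real $\mathbf{a}$ the matrix $L(G,\mathbf{a})$ is real symmetric, so its corank is the multiplicity of $0$ as an eigenvalue; one can try either to deform a complex minimizer $\mathbf{a}^{\ast}$ toward $\operatorname{Re}(\mathbf{a}^{\ast})$ while controlling the rank, or to show outright that already within the few-parameter subfamilies $tI-A$ and $\diag(\mathbf{b})-A$ (with $A$ the adjacency matrix of $G$) the required corank $|V(G)|-r$ is achieved over $\mathbb{R}$. The structural line: induct on $|V(G)|$. For disconnected $G$ both sides are additive ($\mr_\mathbb{R}(G_1\dunion G_2)=\mr_\mathbb{R}(G_1)+\mr_\mathbb{R}(G_2)$, and the corresponding additivity of the algebraic co-rank), and for graphs with a pendant vertex or a cut vertex one combines the standard minimum-rank reductions with the known deletion behaviour of critical ideals to pass to smaller graphs; this parallels how the classifications of the graphs with $\gamma_\mathcal{R}(G)\le 1,2$ were obtained in \cite{alfaval,corrval}. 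One is then left with a family of ``reduced'' graphs (roughly: $2$-connected of minimum degree $\ge 2$), which one would hope to describe by a finite list of obstructions and settle by a Gr\"obner basis computation.

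The principal obstacle is the real-solvability step: a non-trivial ideal with real generators may have empty real variety (as $\langle x^2+1\rangle$ shows), so any successful argument must exploit structure special to critical ideals that is not presently understood — which is precisely why the statement is only conjectured. A secondary difficulty is that $\mr_\mathbb{R}$ is not monotone under induced subgraphs in the crude way one would want, so the inductive reduction is not obviously finite; and the flexible construction, genuinely using matrices in $\S_\mathbb{R}(G)$ beyond generalized Laplacians, appears to be needed exactly for those graphs, if any, with $\mr_\mathbb{R}(G)<\gamma_\mathbb{R}(G)$.
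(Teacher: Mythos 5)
This statement is posed in the paper only as a conjecture: the authors give no proof, and offer as evidence just the cases $|V(G)|\le 7$ (where $\mz(G)=\mr(G)$ and Theorem~\ref{gammaZthm} apply), the case $\mr_{\mathbb{R}}(G)\le 2$, and a computational check that for every graph on at most $6$ vertices the first non-trivial critical ideal over $\mathbb{R}$ has a real zero. Your proposal correctly reconstructs the reduction the paper itself sets up (Lemma~\ref{lemma:zerosetmr} plus the Weak Nullstellensatz, giving $\mr_{\mathbb{C}}(G)\le\gamma_{\mathbb{C}}(G)=\gamma_{\mathbb{R}}(G)$ and reducing the real case to non-emptiness of $V_{\mathbb{R}}(I_{r+1}(G,X_G))$, equivalently to $\mrcr_{\mathbb{R}}(G)=\gamma_{\mathbb{R}}(G)$), and your observation that $\gamma$ is invariant under extension of fields of characteristic zero is sound. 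But the decisive step --- exhibiting a real point of that variety, or more weakly a matrix in $\S_{\mathbb{R}}(G)$ of rank at most $r$ --- is exactly the open problem, and neither of your two lines delivers it: deforming a complex minimizer $\mathbf{a}^{\ast}$ to $\operatorname{Re}(\mathbf{a}^{\ast})$ has no mechanism for preserving corank (rank is lower semicontinuous, so perturbation generically destroys the very degeneracy you need), and the structural induction is only a sketch with no identified base family or finiteness argument. As you yourself concede in the final paragraph, a non-trivial real ideal can have empty real variety, so some structure special to critical ideals must be found; until it is, this remains a plan rather than a proof, which is consistent with the paper leaving the statement as Conjecture~\ref{conjecture:mrleqgamma}.

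One smaller correction: the worry that $\mr_{\mathbb{R}}$ is ``not monotone under induced subgraphs'' is misplaced --- the paper's Lemma~\ref{monotone} records that all three parameters are monotone on induced subgraphs; the real difficulty with the inductive route is that deleting a vertex can decrease $\gamma_{\mathbb{R}}$ and $\mr_{\mathbb{R}}$ by different amounts, so the inequality does not propagate upward from smaller graphs.
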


It is known \cite{small} that $\mr(G)=\mz(G)$ for graphs with at most $7$ vertices.
By Theorem~\ref{gammaZthm}, we know that Conjecture~\ref{conjecture:mrleqgamma} is true up to $7$ vertices.
In fact, for graphs on at most $6$ vertices, it is an equality except for $21$ graphs, see Appendix. 
For the case of integers, we do not have, in general, $\mr_\mathbb{Z}(G)\leq\gamma_\mathbb{Z}(G)$.
For instance, consider complete tripartite graphs $K_{m,n,o}$, it is known \cite[Theorem~4.2]{alfaval} that $\gamma_{\mathbb{Z}}(K_{m,n,o})\leq 2$. 
On the other hand, in \cite[Theorem~4.4]{BF07}, it was proved that when $m,n,o\geq 3$, $3=\mr_\mathbb{R}(K_{m,n,o})\leq\mr_\mathbb{Z}(K_{m,n,o})$.

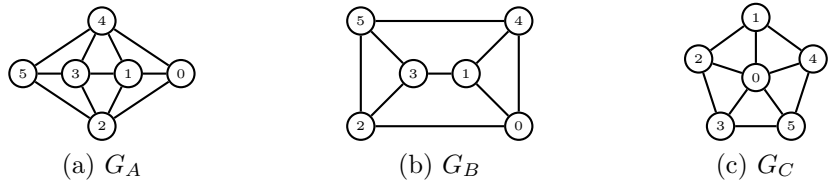
\begin{figure}[h]
\begin{center}
\begin{tabular}{c@{\extracolsep{2cm}}c@{\extracolsep{2cm}}c}
\begin{tikzpicture}[scale=.7,thick]
	\tikzstyle{every node}=[minimum width=0pt, inner sep=2pt, circle]
	\draw (1,1) node[draw] (0) {\tiny 0};
	\draw (0,1) node[draw] (1) {\tiny 1};
	\draw (-0.5,0) node[draw] (2) {\tiny 2};
	\draw (-1,1) node[draw] (3) {\tiny 3};
	\draw (-0.5,2) node[draw] (4) {\tiny 4};
	\draw (-2,1) node[draw] (5) {\tiny 5};
	\draw  (0) edge (1);
	\draw  (0) edge (2);
	\draw  (0) edge (4);
	\draw  (1) edge (2);
	\draw  (1) edge (3);
	\draw  (1) edge (4);
	\draw  (2) edge (3);
	\draw  (2) edge (5);
	\draw  (3) edge (4);
	\draw  (3) edge (5);
	\draw  (4) edge (5);
\end{tikzpicture}
&
\begin{tikzpicture}[scale=.7,thick]
	\tikzstyle{every node}=[minimum width=0pt, inner sep=2pt, circle]
	\draw (1,0) node[draw] (0) {\tiny 0};
	\draw (0,1) node[draw] (1) {\tiny 1};
	\draw (-2,0) node[draw] (2) {\tiny 2};
	\draw (-1,1) node[draw] (3) {\tiny 3};
	\draw (1,2) node[draw] (4) {\tiny 4};
	\draw (-2,2) node[draw] (5) {\tiny 5};
	\draw  (0) edge (1);
	\draw  (0) edge (2);
	\draw  (0) edge (4);
	\draw  (1) edge (3);
	\draw  (1) edge (4);
	\draw  (2) edge (3);
	\draw  (2) edge (5);
	\draw  (3) edge (5);
	\draw  (4) edge (5);
\end{tikzpicture}
&
\begin{tikzpicture}[scale=0.8,thick]
		\tikzstyle{every node}=[minimum width=0pt, inner sep=2pt, circle]
			\draw (0,0) node[draw] (0) {\tiny 0};
			\draw (90:1) node[draw] (1) {\tiny 1};
			\draw (162:1) node[draw] (2) {\tiny 2};
			\draw (234:1) node[draw] (3) {\tiny 3};
			\draw (306:1) node[draw] (4) {\tiny 5};
			\draw (378:1) node[draw] (5) {\tiny 4};
			\draw  (0) edge (1);
			\draw  (0) edge (2);
			\draw  (0) edge (3);
			\draw  (0) edge (4);
			\draw  (0) edge (5);
			\draw  (1) edge (5);
			\draw  (1) edge (2);
			\draw  (2) edge (3);
			\draw  (3) edge (4);
			\draw  (4) edge (5);
		\end{tikzpicture}
\\
(a) $G_A$ & (b) $G_B$ & (c) $G_C$ \\
\end{tabular}
\end{center}
\label{fig:3graphs}
\caption{The only three graphs with at most 6 vertices for which there exists no ${\bf a}\in \mathbb{Z}^n$ such that the first non-trivial critical ideal vanishes at ${\bf a}$}
\end{figure}

A natural approach to Conjecture~\ref{conjecture:mrleqgamma} is to use Lemma~\ref{lemma:zerosetmr}.  We have verified by \textit{Sage} \cite{sage} that for any graph with at most $6$ vertices, there exists an ${\bf a}\in \{-2,-1,0,1,2\}^n$ such that the first non-trivial critical ideal over $\mathbb{R}$ vanishes at ${\bf a}$, except for only 3 graphs; see Figure~\ref{fig:3graphs}.
For these 3 graphs ($G_A$, $G_B$ and $G_C$) there is an ${\bf a}\in \mathbb{R}^6$ such that the first non-trivial ideal vanishes at ${\bf a}$.  This can be done by looking at the Gr\"obner bases of their first non-trivial critical ideals.
We may compute $\gamma_\mathbb{R}(G_A)=\gamma_\mathbb{R}(G_B)=\gamma_\mathbb{R}(G_C)=3$, and the Gr\"obner bases of their first non-trivial critical ideals are as follows:
\begin{eqnarray*} 
I_4(G_A,X_{G_A}) & = & \langle x_0x_1 - x_1 - 2, x_0x_3 + 2x_0 + x_3, x_0x_5 + 1, \\
& & x_1x_3 + x_1 + x_3 + 2, x_1x_5 + x_1 + 2x_5, x_2,\\
& & x_3x_5 - x_3 - 2, x_4\rangle,
\end{eqnarray*}
\[I_4(G_B,X_{G_B})=\langle x_0 + x_5 - 1, x_1 + x_5 - 1, x_2 - x_5, x_3 - x_5, x_4 + x_5 - 1, x_5^2 - x_5 - 1\rangle,\]
and
\[I_4(G_C,X_{G_C})=\langle x_0 + x_5 + 3, x_1 - x_5, x_2 - x_5, x_3 - x_5, x_4 - x_5, x_5^2 + x_5 - 1\rangle.\]

\begin{thm}
    For every graph $G$ with at most 6 vertices and $r=\gamma_\mathbb{R}(G)$, there exists an ${\bf a}\in \mathbb{R}^n$ such that $\langle 0\rangle=I_{r+1}(G,{\bf a})\subseteq \mathbb{R}[X_G]$.
\end{thm}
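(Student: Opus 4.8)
The plan is to reduce the statement to a finite verification that is handled by computer for all but three graphs, together with an explicit solvability argument for those three. First I would restate the conclusion geometrically: $\langle 0\rangle = I_{r+1}(G,{\bf a})$ says exactly that every $(r+1)\times(r+1)$ minor of $L(G,X_G)$ vanishes at ${\bf a}$, i.e. that ${\bf a}\in V_\mathbb{R}(I_{r+1}(G,X_G))$. So the theorem is equivalent to the assertion that $V_\mathbb{R}(I_{r+1}(G,X_G))\neq\emptyset$ for every graph $G$ on at most $6$ vertices with $r=\gamma_\mathbb{R}(G)$. This is precisely the hypothesis feeding Lemma~\ref{lemma:zerosetmr}, so the theorem in particular re-proves Conjecture~\ref{conjecture:mrleqgamma} for graphs on at most $6$ vertices.

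Since whether $V_\mathbb{R}(I_{r+1}(G,X_G))$ is empty depends only on the isomorphism type of $G$, and there are only finitely many graphs on at most $6$ vertices up to isomorphism, it suffices to check one representative of each isomorphism class. For every such graph other than $G_A$, $G_B$, $G_C$ of Figure~\ref{fig:3graphs} I would simply invoke the \emph{Sage} search already described in the text: it produces a lattice point ${\bf a}\in\{-2,-1,0,1,2\}^n$ at which all first non-trivial minors vanish, which settles the case at once.

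It remains to treat $G_A$, $G_B$, $G_C$, each of which satisfies $\gamma_\mathbb{R}=3$, and here I would argue directly from the Gr\"obner bases of $I_4(G_A,X_{G_A})$, $I_4(G_B,X_{G_B})$, $I_4(G_C,X_{G_C})$ displayed above, each of which is essentially triangular. For $G_B$ one generator is $x_5^2-x_5-1$ and for $G_C$ one generator is $x_5^2+x_5-1$, both with two real roots; fixing such an $x_5$ (for example $x_5=(1+\sqrt{5})/2$ for $G_B$) and reading off the remaining coordinates from the other, linear, generators yields an explicit point of the variety in $\mathbb{R}^6$. For $G_A$ the generators $x_2$ and $x_4$ force those two coordinates to be $0$, while $x_0$, $x_1$, $x_3$ can be solved as rational functions of $x_5$; choosing $x_5$ to be any real value avoiding the finitely many poles (for instance $x_5=2$) gives a point in $\mathbb{R}^6 \cap V_\mathbb{R}(I_4(G_A,X_{G_A}))$.

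The only genuinely nonroutine part is this last paragraph: one has to check that the polynomial obtained by elimination in a single variable has a real root — equivalently, that the relevant quadratics have nonnegative discriminant — and that back-substitution keeps all coordinates real without dividing by zero at the chosen value. With the displayed Gr\"obner bases in hand this is a brief explicit computation, and it is exactly the place where the argument uses $\mathbb{R}$ rather than $\mathbb{Z}$: over $\mathbb{Z}$ these three varieties are empty, which is why the integer search fails for precisely these graphs.
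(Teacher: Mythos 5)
Your proposal is correct and follows essentially the same route as the paper: the theorem is established by the preceding discussion, namely the \emph{Sage} verification that a lattice point of the variety exists for all but the three graphs $G_A$, $G_B$, $G_C$, plus an inspection of the displayed Gr\"obner bases to exhibit real points for those three. Your back-substitution details (the quadratics in $x_5$ with positive discriminant for $G_B$, $G_C$, and the rational parametrization for $G_A$) merely make explicit what the paper leaves to the reader.
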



A new variant of the minimum rank problem is to restrict the matrices to the evaluations of the generalized Laplacian matrix.
In the following we will give few examples to this problem.

\begin{defn}
Let $G$ be a graph.  The \emph{critical minimum rank} $\mrcr_\mathcal{R}(G)$ is defined as the minimum rank over $L(G,{\bf d})$ for all ${\bf d}\in \mathcal{R}^n$.  In the case of $\mathcal{R}=\mathbb{R}$, we simply write $\mrcr(G)$ for $\mrcr_\mathbb{R}(G)$.
\end{defn}

By definition, the critical minimum rank is a restricted version of the minimum rank, so $\mr_\mathcal{R}(G)\leq \mrcr_\mathcal{R}(G)$. 
 It is also true that $\gamma_\mathcal{R}(G)\leq \mrcr_\mathcal{R}(G)$ for and any graph $G$.  Figure~\ref{paramdiag} helps us visualize the relations between $\mz(G)$, $\mr_\mathcal{R}(G)$, $\gamma_\mathcal{R}(G)$, and $\mrcr_\mathcal{R}(G)$.
 
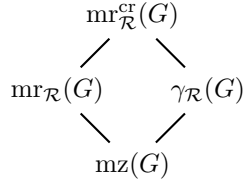
\begin{figure}[h]
\begin{center}
\begin{tikzpicture}[every node/.style={draw=none,rectangle}]
\node (mz) at (0,0) {$\mz(G)$};
\node (mr) at (-1,1) {$\mr_\mathcal{R}(G)$};
\node (gamma) at (1,1) {$\gamma_\mathcal{R}(G)$};
\node (mrcr) at (0,2) {$\mrcr_\mathcal{R}(G)$};
\draw (mz) -- (mr);
\draw (mz) -- (gamma);
\draw (mrcr) -- (mr);
\draw (mrcr) -- (gamma);
\end{tikzpicture}
\end{center}
\caption{The relations between $\mz(G)$, $\mr_\mathcal{R}(G)$, $\gamma_\mathcal{R}(G)$, and $\mrcr_\mathcal{R}(G)$, where each line means the upper parameter is an upper bound for the lower parameter}
\label{paramdiag}
\end{figure}

The inequality $\mr(G)\leq \mrcr(G)$ can be strict.  For example, we know $\mr(\overline{3K_2})=2$ and $\mrcr(\overline{3K_2})\geq \gamma(\overline{3K_2})=3$ by Example~\ref{example:cimr}.

\begin{prop}
\label{crprop}
Let $G$ be a graph.  If there is a vector ${\bf d}\in \mathcal{R}^n$ such that $\rank(L(G,{\bf d}))=\mr_\mathcal{R}(G)$, then $\mr_\mathcal{R}(G)=\mrcr_\mathcal{R}(G)$.  Specifically, if there is a vector ${\bf d}\in\mathbb{Z}^n$ such that $\rank(L(G,{\bf d}))=\mz(G)$, then $\mz(G)=\mrcr_\mathbb{Z}(G)$ and 
\[\mz(G)=\mr(G)=\mr_\mathbb{Z}(G)=\gamma(G)=\gamma_\mathbb{Z}(G)=\mrcr_\mathbb{Z}(G).\]
\end{prop}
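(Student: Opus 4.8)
The plan is to obtain everything from a short chain of sandwich inequalities, using only the monotonicity relations already collected in Figure~\ref{paramdiag} and Theorem~\ref{gammaZthm}, together with two elementary facts: that $L(G,{\bf d})$ always lies in $\S_\mathcal{R}(G)$, and that the rank of an integer matrix does not change when we view it over $\mathbb{R}$.

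For the first assertion I would first note that for every ${\bf d}\in\mathcal{R}^n$ the off-diagonal $uv$-entry of $L(G,{\bf d})$ equals $-m_{uv}$, which is nonzero precisely when $uv\in E(G)$; hence $L(G,{\bf d})\in\S_\mathcal{R}(G)$, so $\rank(L(G,{\bf d}))\ge\mr_\mathcal{R}(G)$, and taking the minimum over ${\bf d}$ recovers the inequality $\mr_\mathcal{R}(G)\le\mrcr_\mathcal{R}(G)$ recorded just before the statement. If some ${\bf d}$ achieves $\rank(L(G,{\bf d}))=\mr_\mathcal{R}(G)$, then $\mrcr_\mathcal{R}(G)\le\rank(L(G,{\bf d}))=\mr_\mathcal{R}(G)$, and equality follows. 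For the second assertion, let ${\bf d}\in\mathbb{Z}^n$ with $\rank(L(G,{\bf d}))=\mz(G)=:r$. Since $L(G,{\bf d})$ is an integer matrix, its rank over $\mathbb{Z}$ and over $\mathbb{R}$ agree, so $\mrcr_\mathbb{Z}(G)\le r=\mz(G)$, and likewise $\mrcr_\mathbb{R}(G)\le\mrcr_\mathbb{Z}(G)\le\mz(G)$ because every integer evaluation vector is also a real one with the same rank. Now I would string together the known bounds in four separate squeezes. On the rank side, $\mz(G)\le\mr_\mathbb{R}(G)\le\mr_\mathbb{Z}(G)\le\mrcr_\mathbb{Z}(G)\le\mz(G)$, where the first inequality is the bound $\mz\le\mr_\mathcal{F}$ applied over $\mathbb{R}$, the second is $\S_\mathbb{Z}(G)\subseteq\S_\mathbb{R}(G)$ together with rank-invariance, the third is $\mr\le\mrcr$ over $\mathbb{Z}$, and the last is the observation above; hence all of these equal $\mz(G)$. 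On the algebraic co-rank side, $\mz(G)\le\gamma_\mathbb{Z}(G)\le\mrcr_\mathbb{Z}(G)\le\mz(G)$ by Theorem~\ref{gammaZthm} and $\gamma\le\mrcr$, so $\gamma_\mathbb{Z}(G)=\mz(G)$; and $\mz(G)\le\gamma_\mathbb{R}(G)\le\mrcr_\mathbb{R}(G)\le\mrcr_\mathbb{Z}(G)=\mz(G)$ gives $\gamma(G)=\mz(G)$. Collecting these equalities yields $\mz(G)=\mrcr_\mathbb{Z}(G)$ and the displayed chain.

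I do not expect a genuine obstacle: the whole argument is a squeeze that only invokes already-established monotonicity and the two elementary facts above. The only point needing a little care is the bookkeeping, namely making sure each of the four chains uses inequalities that have already been proved in the paper (in particular that $\mr_\mathbb{R}(G)\le\mr_\mathbb{Z}(G)$ and $\mrcr_\mathbb{R}(G)\le\mrcr_\mathbb{Z}(G)$, both of which rest on $\mathbb{Z}^n\subseteq\mathbb{R}^n$ and the equality of ranks over $\mathbb{Z}$ and $\mathbb{R}$), and keeping straight which occurrence of $\mrcr$ is over which ring.
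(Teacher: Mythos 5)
Your proposal is correct and follows essentially the same route as the paper: the key step in both is that $\mrcr_\mathcal{R}(G)\leq\rank(L(G,{\bf d}))$ by definition, which combined with the already-established inequalities $\mz\leq\mr$, $\mz\leq\gamma$, $\mr\leq\mrcr$, $\gamma\leq\mrcr$ (and the ring comparisons coming from $\mathbb{Z}^n\subseteq\mathbb{R}^n$ and rank-invariance) squeezes all the quantities to $\mz(G)$. The only cosmetic difference is that you run two separate chains for $\gamma_\mathbb{Z}$ and $\gamma_\mathbb{R}$ where the paper uses the single chain $\mz\leq\gamma_\mathbb{Z}\leq\gamma_\mathbb{R}\leq\mrcr_\mathbb{R}\leq\mrcr_\mathbb{Z}$.
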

\begin{proof}
If there is a vector ${\bf d}\in \mathcal{R}^n$ such that $\rank(L(G,{\bf d}))=\mr_\mathcal{R}(G)$, then 
\[\mrcr_\mathcal{R}(G)\leq \rank(L(G,{\bf d}))=\mr_\mathcal{R}(G).\]
Through the diagram in Figure~\ref{paramdiag}, we know $\mr_\mathcal{R}(G)= \mrcr_\mathcal{R}(G)$.

Similarly, if there is a vector ${\bf d}\in\mathbb{Z}^n$ such that $\rank(L(G,{\bf d}))=\mz(G)$, then 
\[\mrcr_\mathbb{Z}(G)\leq \rank(L(G,{\bf d}))=\mz(G).\]
Through the diagram in Figure~\ref{paramdiag}, we know $\mr_\mathbb{Z}(G)= \mz(G)$.  Since 
\[\mz(G)\leq \mr_\mathbb{R}(G)\leq \mr_\mathbb{Z}(G)\leq \mrcr_\mathbb{Z}(G)\]
and 
\[\mz(G)\leq \gamma_\mathbb{Z}(G)\leq \gamma_\mathbb{R}(G)\leq \mrcr_\mathbb{R}(G)\leq \mrcr_\mathbb{Z}(G),\]
we know all the mentioned quantities are the same.
\end{proof}

The next section we will use Proposition~\ref{crprop} to show that $\mz(G)=\gamma(G)=\mr(G)$ for many graphs.

\section{Graphs with $\mz(G)=\gamma(G)=\mr(G)$}\label{section:mzmrgamma}

We begin by showing $\mz(G)=\gamma(G)=\mr(G)$ when $G$ is a tree and combining results in the literature.

The minimum rank of a tree has been well-studied.  In \cite{LDJ99}, it was shown that $M(T)=P(T)=\Delta(T)$ for any tree $T$.  Here $M(T)$ is the \emph{maximum nullity} of $T$ over $\mathbb{R}$, which is equal to $|V(T)|-\mr(T)$; the \emph{path cover number} $P(T)$ is the minimum number of disjoint induced paths on $T$  that can cover the vertices of $T$; the parameter $\Delta(T)$ is defined as the maximum of $p-q$ such that by deleting $q$ vertices from $T$ the remaining graph becomes $p$ paths.  In \cite{AIMZmr} it was proved that $\mr(T)=\mz(T)$ for any tree $T$.  It is also known \cite{CDHMP07} that the minimum rank of a tree is field independent; that is, $\mr(T)=\mr_\mathcal{F}(T)$ for any field $\mathcal{F}$.  In summary, for any tree $T$ on $n$ vertices, it is known that 
\[\mr(T)=\mz(T)=n-P(T)=n-\Delta(T)=\mr_\mathcal{F}(T)\]
for any field $\mathcal{F}$.  

Next, we will show that $\mz(T)=\mrcr_\mathbb{Z}(T)$ for any tree $T$.
\begin{thm}
\label{treeequal}
For any tree $T$, $\mz(T)=\mrcr_\mathbb{Z}(T)$, which is the same as $\mr(T)$, $\mr_\mathbb{Z}(T)$, $\gamma(T)$, $\gamma_\mathbb{Z}(T)$, $n-P(T)$, and $n-\Delta(T)$.
\end{thm}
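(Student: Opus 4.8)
The plan is to push everything through Proposition~\ref{crprop}. By the results quoted above, for a tree $T$ on $n$ vertices one already knows $\mr(T)=\mz(T)=n-P(T)=n-\Delta(T)=\mr_\mathcal{F}(T)$ for every field $\mathcal{F}$, so by Proposition~\ref{crprop} it suffices to produce a single vector ${\bf d}\in\mathbb{Z}^n$ with $\rank\big(L(T,{\bf d})\big)=\mz(T)$; the chain $\mz(T)=\mr(T)=\mr_\mathbb{Z}(T)=\gamma(T)=\gamma_\mathbb{Z}(T)=\mrcr_\mathbb{Z}(T)$ then follows at once, and all of these equal $n-P(T)=n-\Delta(T)$. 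Since $L(T,{\bf d})\in\S_\mathbb{Z}(T)\subseteq\S(T)$, its rank is always at least $\mr(T)=\mz(T)$ (and over $\mathbb{Z}$ the rank coincides with the rank over $\mathbb{R}$), so the real content is to choose ${\bf d}$ with $\nul\big(L(T,{\bf d})\big)\ge P(T)$.

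For the construction I would use the $\Delta$-formulation. Fix a set $S\subseteq V(T)$ with $|S|=q$ that witnesses $\Delta(T)=P(T)$, so that $T-S$ is a disjoint union of induced paths $R_1,\dots,R_p$ with $p-q=P(T)$. Let ${\bf d}$ be $0$ on the coordinates indexed by $S$ and, on each $R_i$, an integer diagonal making the tridiagonal generalized Laplacian $L(R_i,{\bf d}|_{R_i})$ singular; explicitly one can take the all-zero diagonal when $|R_i|$ is odd and $(1,0,\dots,0,1)$ when $|R_i|$ is even, a short tridiagonal-determinant recursion showing the matrix is singular (in fact of nullity one) in both cases. Let $z_i\neq 0$ be a kernel vector of $L(R_i,{\bf d}|_{R_i})$, regarded as a vector on $V(T)$ supported on $R_i$.

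Because $R_1,\dots,R_p$ are the connected components of $T-S$, there are no edges between distinct $R_i$, so the principal submatrix of $L(T,{\bf d})$ on $R_1\cup\dots\cup R_p$ is block diagonal; hence every linear combination $z=\sum_i c_i z_i$ satisfies all equations of $L(T,{\bf d})z=0$ indexed by $\bigcup_i R_i$. The equations indexed by the vertices $s\in S$ reduce, using $z_s=0$, to $\sum_i c_i\big(\sum_{u\in R_i,\ u\sim s}(z_i)_u\big)=0$, a homogeneous system of $q$ equations in the $p$ unknowns $c_1,\dots,c_p$, so its solution space has dimension at least $p-q=P(T)$. Since the supports $R_i$ are pairwise disjoint and each $z_i\neq 0$, the linear map $(c_i)\mapsto\sum_i c_i z_i$ is injective, so a basis of that solution space yields $P(T)$ linearly independent kernel vectors; therefore $\nul\big(L(T,{\bf d})\big)\ge P(T)$, i.e.\ $\rank\big(L(T,{\bf d})\big)\le n-P(T)=\mz(T)$, and with the reverse inequality this is an equality. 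Proposition~\ref{crprop} then finishes the proof.

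The only delicate point is the choice of the per-path diagonals: one must check that each $L(R_i,{\bf d}|_{R_i})$ is genuinely singular (while remaining a bona fide generalized Laplacian, i.e.\ with every off-diagonal entry equal to $-1$), and that it loses exactly one unit of rank, so that the bound $\rank\le n-P(T)$ is tight. This is the short tridiagonal computation sketched above, and it is the crux; the remaining ingredients — the reduction via Proposition~\ref{crprop}, the block structure of $T-S$, and the dimension count of the $S$-system — are routine bookkeeping.
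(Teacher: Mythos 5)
Your proof is correct, and the overall strategy --- reduce everything to Proposition~\ref{crprop} by exhibiting a single integer vector ${\bf d}$ with $\rank(L(T,{\bf d}))=\mz(T)$ --- is exactly the paper's. Where you diverge is in how that witness is produced. The paper simply invokes \cite[Theorem 4.2]{Hogben05}: a tree admits a $0,1$-matrix $A\in\S(T)$ with $\rank(A)=\mr(T)$, and then $-A=L(T,-{\bf d}_A)$ is already an integer evaluation of the generalized Laplacian. You instead build ${\bf d}$ by hand from the $\Delta(T)=P(T)$ structure theorem of \cite{LDJ99}: delete $q$ vertices leaving $p$ component paths with $p-q=P(T)$, make each path block singular by a short tridiagonal determinant recursion, and count kernel dimensions against the $q$ equations indexed by the deleted vertices to get $\nul(L(T,{\bf d}))\ge p-q=P(T)$; the reverse bound $\rank(L(T,{\bf d}))\ge \mr(T)=n-P(T)$ is automatic since $L(T,{\bf d})\in\S(T)$. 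I checked the two diagonal choices (all zeros for odd paths, $(1,0,\dots,0,1)$ for even paths) and the recursion $D_k=d_kD_{k-1}-D_{k-2}$ does give $D_k=0$ in both cases, and the block-diagonal/injectivity bookkeeping is sound because the $R_i$ are distinct components of $T-S$. Your route is longer but self-contained modulo the facts already quoted before the theorem ($M(T)=P(T)=\Delta(T)$ and $\mr(T)=\mz(T)$), and it has the merit of being explicitly constructive, whereas the paper's is a two-line reduction to a known universal-optimality-type result. One small remark: the ``nullity exactly one'' claim for each path block, while true (a tridiagonal matrix with nonzero off-diagonal entries has rank at least $k-1$), is not actually needed for tightness --- singularity of each block plus the $p-q$ dimension count gives $\rank\le n-P(T)$, and tightness comes from $\rank\ge\mr(T)$, as you yourself note earlier in the argument.
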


\begin{proof}
It is known \cite{AIMZmr} that $\mr(T)=\mz(T)$.  Let $A$ be a matrix in $\S(T)$ such that $\rank(A)=\mr(T)=\mz(T)$.  By \cite[Theorem 4.2]{Hogben05}, $A$ can be chosen as a $0,1$-matrix.  Let ${\bf d}_A$ be the vector that records the diagonal of $A$.  Then $-A=L(T,-{\bf d}_A)$.  By Proposition~\ref{crprop}, $\mz(T)=\mrcr_\mathbb{Z}(T)$, so the related quantities are the same.
\end{proof}

For a graph $G$, a \emph{$2$-matching} is a set edges $\mathcal{M}\subseteq E(G)$ such that every vertex of $G$ is incident to at most two edges in $\mathcal{M}$.  One may think of $\mathcal{M}$ as the edges of a disjoint union of (not necessarily induced) paths as a subgraph of $G$.  The \emph{$2$-matching number} $\nu_2(G)$ is the maximum cardinality (number of edges) of a $2$-matching of $G$.  In \cite{corrval1} it was proved that $\gamma_\mathbb{Z}(T)=\nu_2(T)$ for any tree $T$.  By Theorem~\ref{treeequal}, we know $\nu_2(T)=\gamma_\mathbb{Z}(T)=n-P(T)$ for any tree $T$ on $n$ vertices.  Below we give a direct proof of this result.

\begin{prop}
For any tree $T$ on $n$ vertices, $\nu_2(T)=n-P(T)$.
\end{prop}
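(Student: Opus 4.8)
The plan is to establish the identity $\nu_2(T) = n - P(T)$ directly, by showing that a maximum $2$-matching and a minimum path cover are, in a precise sense, complementary structures. The key observation is that if $\mathcal{P} = \{P_1, \dots, P_k\}$ is a collection of $k$ vertex-disjoint induced paths covering all $n$ vertices of $T$, then the edges used by these paths form a $2$-matching (indeed each vertex lies on at most one path, hence is incident to at most two path-edges), and the number of such edges is exactly $n - k$, since a path on $m$ vertices has $m - 1$ edges and the paths partition the vertex set. Taking $\mathcal{P}$ to be a minimum path cover gives $\nu_2(T) \geq n - P(T)$.

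For the reverse inequality, I would start from a maximum $2$-matching $\mathcal{M}$ with $|\mathcal{M}| = \nu_2(T)$. Viewing $\mathcal{M}$ as a subgraph of $T$, every vertex has degree at most $2$ in $\mathcal{M}$, so the connected components of $\mathcal{M}$ are paths and cycles; but $T$ is a tree, so there are no cycles, and the components are paths (including isolated vertices as trivial paths). If these paths used as a subgraph happen to be induced in $T$, we would immediately get a path cover of size $n - \nu_2(T)$ and be done. The subtlety — and what I expect to be the main obstacle — is that a component path of $\mathcal{M}$ need not be an \emph{induced} path of $T$: $T$ could contain a chord between two non-consecutive vertices of the path. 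I would argue this cannot happen for a tree: a chord together with the sub-path it spans would create a cycle in $T$, contradicting acyclicity. Hence every component of $\mathcal{M}$ is automatically an induced path, and together with the isolated vertices (trivial induced paths) these components form a path cover of $T$ with exactly $n - \nu_2(T)$ parts, giving $P(T) \leq n - \nu_2(T)$.

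Combining the two inequalities yields $\nu_2(T) = n - P(T)$. The argument is entirely combinatorial and self-contained; the only place where the tree hypothesis is essential is the step ruling out cycles among the components of $\mathcal{M}$ and ruling out chords within a component path, so I would be careful to state explicitly that both of these rely on $T$ being acyclic. No results from earlier in the paper are needed beyond the definitions of $\nu_2$ and $P$, though this does reprove, via Theorem~\ref{treeequal}, the cited equality $\gamma_\mathbb{Z}(T) = \nu_2(T)$ restricted to trees.
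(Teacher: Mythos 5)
Your proposal is correct and follows essentially the same route as the paper: one direction takes the edge set of a minimum path cover as a $2$-matching with $n-P(T)$ edges, and the other turns the components of a maximum $2$-matching (plus uncovered vertices as trivial paths) into a path cover of size $n-\nu_2(T)$, using acyclicity of $T$ to rule out cycles and chords. Your write-up is slightly more explicit than the paper's about why each component of the $2$-matching is an \emph{induced} path, but the argument is the same.
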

\begin{proof}
Let $\mathcal{M}$ be a maximum $2$-matching of $T$.  Let $X$ be the set of vertices that are incident to at least one edge in $\mathcal{M}$.  Since $\mathcal{M}$ is maximum, $G-X$ should be some isolated vertices.  Consider $\mathcal{P}$ as a collection of path that includes the paths in $\mathcal{M}$ and the isolated vertices (each as a path on one vertex) in $G-X$.  Since $T$ is a tree, every path in $\mathcal{P}$ is an induced path.  The paths in $\mathcal{P}$ covers all vertices of $T$ and has $|\mathcal{M}|$ edges.  Thus, there are $n-|\mathcal{M}|$ paths in $\mathcal{P}$ and $P(T)\leq n-|\mathcal{M}|=n-\nu_2(T)$.

Conversely, each path cover of $T$ with $P(T)$ paths has $n-P(T)$ edges, and these edges form a $2$-matching of $T$, so $\nu_2(T)\geq n-P(T)$.
\end{proof}

\begin{cor}
There is a linear-time algorithm for finding $\gamma_\mathbb{Z}(T)$, which is the same as $\nu_2(T)$, $\mz(T)$, $\mr(T)$, $\mr_\mathbb{Z}(T)$, $\gamma(T)$, $n-P(T)$, and $n-\Delta(T)$.
\end{cor}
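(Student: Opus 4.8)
The plan is to observe first that the equalities asserted in the statement are already in hand, so that the only remaining content is the algorithmic claim, and then to supply a linear-time bottom-up dynamic program on $T$ that computes one of these equal quantities; I would compute the $2$-matching number $\nu_2(T)$.

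Indeed, Theorem~\ref{treeequal} together with the Proposition immediately preceding this corollary shows that $\nu_2(T)$, $\mz(T)$, $\mr(T)$, $\mr_\mathbb{Z}(T)$, $\gamma(T)$, $\gamma_\mathbb{Z}(T)$, $n-P(T)$, and $n-\Delta(T)$ all coincide. So it suffices to exhibit an $O(n)$-time procedure that, given $T$ in adjacency-list form, returns $\nu_2(T)$.

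For this, root $T$ at an arbitrary vertex $r$ and fix a post-order of $V(T)$, both obtainable in $O(n)$ time. For a vertex $v$ with children $c_1,\dots,c_m$, let $T_v$ be the rooted subtree at $v$; let $g(v)$ be the maximum size of a $2$-matching of $T_v$, and $h(v)$ the maximum size of a $2$-matching of $T_v$ in which $v$ is incident to at most one chosen edge, so that $h(v)\le g(v)$ and $g(v)=h(v)=0$ for a leaf. Set $\delta_i=1+h(c_i)-g(c_i)$, and let $\sigma_k(v)$ be the sum of the $k$ largest nonnegative numbers among $\delta_1,\dots,\delta_m$, taken to be $0$ when no $\delta_i$ is nonnegative. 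Since $T_{c_i}$ is joined to the rest of $T$ only by the edge $vc_i$, any $2$-matching $\mathcal{M}$ of $T_v$ is recorded by the set $S$ of children with $vc_i\in\mathcal{M}$ together with the traces $\mathcal{M}\cap E(T_{c_i})$ (where for $i\in S$ the trace leaves $c_i$ incident to at most one of its edges), with $|\mathcal{M}|=|S|+\sum_i|\mathcal{M}\cap E(T_{c_i})|$; optimizing each trace contributes $g(c_i)$ for $i\notin S$ and $1+h(c_i)$ for $i\in S$, i.e.\ a gain of $\delta_i$, while the degree bound at $v$ forces $|S|\le 2$ when computing $g$ and $|S|\le 1$ when computing $h$. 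Hence
\[
g(v)=\sum_{i=1}^{m} g(c_i)+\sigma_2(v),\qquad h(v)=\sum_{i=1}^{m} g(c_i)+\sigma_1(v),
\]
and $\nu_2(T)=g(r)$. Computing the $\delta_i$ and the top two among them costs $O(\deg v)$ per vertex, so the algorithm runs in $O(n)$ time overall. Alternatively, one could appeal to the linear-time computability of $P(T)$ implicit in the structural description of \cite{LDJ99} and then invoke the Proposition.

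The step needing real care is the correctness of the recursion for $g$ and $h$: an exchange argument is required to show that an optimal $2$-matching of $T_v$ may be assumed to restrict to each $T_{c_i}$ as an optimal $2$-matching of $T_{c_i}$ when $vc_i$ is unused, and as an optimal $2$-matching of $T_{c_i}$ among those leaving $c_i$ incident to at most one chosen internal edge when $vc_i$ is used; this is precisely where one uses that distinct subtrees interact only through the single joining edges and through the constraint $\deg_{\mathcal{M}}(v)\le 2$. Granting that, the optimal set of edges $vc_i$ to include is exactly the top-$k$ selection encoded by $\sigma_k$, and the remaining ingredients — rooting, the post-order traversal, and the per-vertex selection — are standard linear-time routines, which yields the corollary.
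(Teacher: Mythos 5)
Your proof is correct, but it takes a genuinely different route from the paper's. The paper disposes of the algorithmic claim in one line by citing \cite{JS02} for a linear-time algorithm computing $\Delta(T)$ and then invoking the chain of equalities from Theorem~\ref{treeequal}; you instead keep the equalities as the reduction step but supply a self-contained $O(n)$ dynamic program computing $\nu_2(T)$ directly. Your recursion is sound: the two states $g(v)$ and $h(v)$ are exactly what is needed, the decomposition of a $2$-matching of $T_v$ into the child-edge set $S$ with $|S|\le 2$ (resp.\ $|S|\le 1$) plus independent traces on the subtrees is valid because distinct subtrees meet only at $v$, and the exchange argument you flag is immediate from that independence. (In fact every $\delta_i$ lies in $\{0,1\}$, since deleting one edge at $c_i$ from an optimal $2$-matching of $T_{c_i}$ shows $h(c_i)\ge g(c_i)-1$, so the nonnegativity caveat in $\sigma_k$ is vacuous but harmless.) What each approach buys: the paper's is shorter and leans on the literature, whereas yours makes the corollary constructive and self-contained, and has the additional feature of computing the parameter $\nu_2(T)=\gamma_\mathbb{Z}(T)$ on the critical-ideals side directly rather than going through $\Delta(T)$. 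Your closing remark about instead extracting linear-time computability of $P(T)$ from \cite{LDJ99} is essentially the paper's actual argument.
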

\begin{proof}
In \cite{JS02}, there is a linear-time algorithm for finding $\Delta(T)$.  By Theorem~\ref{treeequal}, all the mentioned quantities can be found in linear-time.
\end{proof}

\begin{prop}
For any cycle $C_n$ with $n\geq 3$, $\mz(C_n)=\mrcr_\mathbb{Z}(C_n)=n-2$, which is the same as $\mr(C_n)$, $\mr_\mathbb{Z}(C_n)$, $\gamma(C_n)$, and $\gamma_\mathbb{Z}(C_n)$.
\end{prop}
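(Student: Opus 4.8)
The plan is to invoke Proposition~\ref{crprop}: as soon as we exhibit a vector ${\bf d}\in\mathbb{Z}^n$ with $\rank(L(C_n,{\bf d}))=\mz(C_n)$, all of the listed quantities are forced to coincide. Since any two adjacent vertices of $C_n$ form a zero forcing set while a single blue vertex cannot force, $Z(C_n)=2$ and hence $\mz(C_n)=n-2$. So the problem reduces to producing ${\bf d}\in\mathbb{Z}^n$ with $\nul(L(C_n,{\bf d}))=2$, equivalently $\rank(L(C_n,{\bf d}))=n-2$.

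To compute $\nul(L(C_n,{\bf d}))$ I would label $V(C_n)=\{1,\dots,n\}$ cyclically and rewrite $L(C_n,{\bf d})v=0$ as the linear recurrence $v_{i+1}=d_i v_i-v_{i-1}$ with indices modulo $n$. Putting $u_i=(v_i,v_{i-1})^{\top}$ and $R_d=\left(\begin{smallmatrix}d&-1\\1&0\end{smallmatrix}\right)\in\mathrm{SL}_2(\mathbb{Z})$, the recurrence becomes $u_{i+1}=R_{d_i}u_i$, and the cyclic boundary condition is exactly $M_{\bf d}\,u_1=u_1$ where $M_{\bf d}=R_{d_n}R_{d_{n-1}}\cdots R_{d_1}$. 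The assignment $v\mapsto u_1$ is a linear bijection from the solution space of $L(C_n,{\bf d})v=0$ onto $\ker(M_{\bf d}-I)$ (one recovers $v$ from $u_1$ by running the recurrence forward, and $M_{\bf d}u_1=u_1$ is precisely the compatibility needed at the wrap-around). Consequently $\nul(L(C_n,{\bf d}))=\dim\ker(M_{\bf d}-I)\le 2$, with equality iff $M_{\bf d}=I$. Thus it suffices to show: for every $n\ge 3$ there is ${\bf d}\in\mathbb{Z}^n$ with $R_{d_n}\cdots R_{d_1}=I$.

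For the construction I would first record the short matrix identities $R_{-1}^{3}=I$, $R_{0}^{4}=I$, and $R_{0}^{2}=R_{1}^{3}=-I$, each a direct $2\times2$ computation. Then take ${\bf d}=(-1,-1,-1)$ for $n=3$, ${\bf d}=(0,0,0,0)$ for $n=4$, and ${\bf d}=(0,0,1,1,1)$ for $n=5$ (so $M_{\bf d}=R_{1}^{3}R_{0}^{2}=(-I)(-I)=I$); for $n\ge 6$, append $(-1,-1,-1)$ to a vector that works for $n-3$, which multiplies $M_{\bf d}$ on the left by $R_{-1}^{3}=I$ and so leaves $M_{\bf d}=I$. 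By induction this yields the desired ${\bf d}\in\mathbb{Z}^n$ for all $n\ge 3$, whence $\rank(L(C_n,{\bf d}))=n-2=\mz(C_n)$, and Proposition~\ref{crprop} delivers $\mz(C_n)=\mr(C_n)=\mr_\mathbb{Z}(C_n)=\gamma(C_n)=\gamma_\mathbb{Z}(C_n)=\mrcr_\mathbb{Z}(C_n)=n-2$.

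The only place demanding care is the transfer-matrix reduction in the second paragraph — verifying that $v\mapsto u_1$ really is a bijection onto $\ker(M_{\bf d}-I)$, i.e. that the single equation $M_{\bf d}u_1=u_1$ captures the full cyclic system without loss. Everything else (the finite-order identities, the base cases, the induction step) is a routine check.
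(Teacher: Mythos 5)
Your proposal is correct, and while it shares the paper's overall skeleton --- reduce everything to exhibiting a single integer vector ${\bf d}$ with $\rank(L(C_n,{\bf d}))=n-2=\mz(C_n)$ and then invoke Proposition~\ref{crprop} --- the way you produce that witness is genuinely different. The paper outsources the existence of a suitable matrix to \cite[Theorem 4.5]{Hogben05}, which supplies a $0,1$-matrix in $\S(C_n)$ of rank $n-2$ for every $n\neq 5$, negates it to land in the image of $L(C_n,\cdot)$, and then patches the exceptional case $n=5$ with the explicit vector $(0,-1,1,1,2)$. You instead give a self-contained, uniform construction: the kernel condition $L(C_n,{\bf d})v=0$ is a three-term cyclic recurrence, the transfer matrices $R_d$ lie in $\mathrm{SL}_2(\mathbb{Z})$, the nullity equals $\dim\ker(M_{\bf d}-I)$ with $M_{\bf d}$ the monodromy product, and the finite-order identities $R_{-1}^3=I$, $R_0^2=R_1^3=-I$ let you force $M_{\bf d}=I$ for every $n\geq 3$ by a three-step induction. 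I checked the details: the bijection $v\mapsto u_1$ onto $\ker(M_{\bf d}-I)$ is exactly right (injectivity because the recurrence reconstructs $v$ from $(v_1,v_n)$, surjectivity because $M_{\bf d}u_1=u_1$ encodes both wrap-around equations), the matrix identities are correct, and the base cases $n=3,4,5$ and the append-$(-1,-1,-1)$ step all work. Your argument buys independence from the cited inverse-eigenvalue literature and treats all $n$ on the same footing (no special case at $n=5$); the paper's version is shorter on the page but leans on an external theorem. Either way the conclusion via Proposition~\ref{crprop} is the same.
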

\begin{proof}
By \cite[Theorem 4.5]{Hogben05}, for each $n\neq 5$, there is a $0,1$-matrix $A$ such that $\rank(A)=\mz(C_n)=n-2$.  Let ${\bf d}_A$ be the vector recording the diagonal entries of $A$, then $\rank(L(C_n,-{\bf d}_A))=n-2$.  For $n=5$, we may pick ${\bf d}=(0,-1,1,1,2)$, where the vertices are labeled with respect to the cycle order.  Thus, $\rank(L(C_5,{\bf d}))=3=n-2$.  Consequently, we may apply Proposition~\ref{crprop} to get the desired results.
\end{proof}

\begin{prop}
The Petersen graph $G$ has $\mz(G)=\mrcr_\mathbb{Z}(G)=5$, which is the same as $\mr(G)$, $\mr_\mathbb{Z}(G)$, $\gamma(G)$, and $\gamma_\mathbb{Z}(G)$.
\end{prop}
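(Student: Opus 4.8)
The plan is to reuse the strategy of the previous proposition on cycles: exhibit an integer vector ${\bf d}\in\mathbb{Z}^{10}$ with $\rank(L(G,{\bf d}))=\mz(G)$ and then quote Proposition~\ref{crprop}, which immediately collapses all the listed parameters to a single value. Thus the work splits into two independent tasks: (i) pin down $\mz(G)=5$, and (ii) produce an integer evaluation of the generalized Laplacian of rank $5$.

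For task (ii) I would simply take ${\bf d}={\bf 1}$, the all-ones vector. Then $L(G,{\bf 1})=I-A$, where $A$ is the adjacency matrix of the Petersen graph. Since the Petersen graph is strongly regular with parameters $(10,3,0,1)$, the eigenvalue $1$ of $A$ has multiplicity $5$; hence $I-A$ has nullity $5$ and $\rank(L(G,{\bf 1}))=5$. Only the integrality of ${\bf 1}$ and this rank count are needed, so any reference to the Petersen spectrum does the job.

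For task (i), note first that the chain $\mz(G)\le\mr_\mathbb{R}(G)\le\rank(I-A)=5$ already gives $Z(G)\ge 5$, so it only remains to exhibit a zero forcing set of size $5$. Using the standard picture with outer $5$-cycle $v_1v_2v_3v_4v_5$, inner vertices $u_1,\dots,u_5$ joined by $u_i\sim u_{i+2}$ (indices mod $5$), and spokes $v_i\sim u_i$, I would verify that $B=\{v_1,v_2,v_3,v_4,u_1\}$ works: $v_1$ has $v_5$ as its unique white neighbor, so $v_1\rightarrow v_5$; thereafter each $v_i$ for $i=2,3,4,5$ has its spoke $u_i$ as its unique white neighbor, and these four forces finish the graph. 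Hence $Z(G)\le 5$ and $\mz(G)=5$. (Alternatively, one may simply cite the known value $Z(G)=5$.)

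Putting the pieces together, ${\bf 1}\in\mathbb{Z}^{10}$ satisfies $\rank(L(G,{\bf 1}))=5=\mz(G)$, so Proposition~\ref{crprop} applies verbatim and yields $\mz(G)=\mrcr_\mathbb{Z}(G)=5$, equal also to $\mr(G)$, $\mr_\mathbb{Z}(G)$, $\gamma(G)$, and $\gamma_\mathbb{Z}(G)$. I do not anticipate a genuine obstacle; the only step needing a moment's care is checking that the chosen $5$-set really is a zero forcing set, the matching lower bound $Z(G)\ge 5$ being handed to us for free by the rank estimate rather than by a case analysis over all $4$-sets.
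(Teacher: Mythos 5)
Your proof is correct and follows essentially the same route as the paper: both exhibit the integer evaluation $\pm(I-A)=L(G,\mathbf{1})$ of the generalized Laplacian with rank $5=\mz(G)$ and then invoke Proposition~\ref{crprop}. The only difference is that the paper outsources the facts $\rank(A-I)=5=\mr(G)=\mz(G)$ to a citation of DeAlba--Grout--Hogben--Mikkelson--Rasmussen, whereas you verify them directly from the Petersen spectrum and an explicit zero forcing set, which is a harmless (and self-contained) substitute.
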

\begin{proof}
By \cite[Proposition 2.8]{DGHMR09}, the adjacency matrix $A$ of the Petersen graph has $\rank(A-I)=5=\mr(G)=\mz(G)$.  Therefore, the desired results follow from Proposition~\ref{crprop}.
\end{proof}

\begin{prop}
Let $G$ be the line graph of a tree.  Then $\mz(G)=\mrcr_\mathbb{Z}(G)$, which is the same as $\mr(G)$, $\mr_\mathbb{Z}(G)$, $\gamma(G)$, and $\gamma_\mathbb{Z}(G)$.
\end{prop}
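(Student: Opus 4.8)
The plan is to apply Proposition~\ref{crprop} exactly as in the preceding propositions: it suffices to produce an integer diagonal vector ${\bf d}\in\mathbb{Z}^n$ with $\rank(L(G,{\bf d}))=\mz(G)$, where $G=L(T)$ and $n=|V(L(T))|=|E(T)|$. Since $L(T)$ is a simple graph, $L(G,{\bf d})$ carries $-1$ in each edge position, $0$ in each non-edge position, and the entries of ${\bf d}$ on the diagonal; hence $-L(G,{\bf d})$ runs precisely over the symmetric integer matrices whose off-diagonal part equals the adjacency matrix of $L(T)$ and whose diagonal is an arbitrary integer vector. In particular it is enough to find a $0,1$-matrix $A\in\S(L(T))$ with $\rank(A)=\mz(L(T))$: letting ${\bf d}_A$ record the diagonal of $A$ gives $-A=L(L(T),-{\bf d}_A)$ with $\rank(-A)=\rank(A)=\mz(L(T))$, and Proposition~\ref{crprop} then forces $\mz(G)=\mr(G)=\mr_\mathbb{Z}(G)=\gamma(G)=\gamma_\mathbb{Z}(G)=\mrcr_\mathbb{Z}(G)$, which is exactly the asserted collapse.

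Thus the proof reduces to the two ingredients that were available for trees, cycles, and the Petersen graph: first, that $\mz(L(T))=\mr(L(T))$ for every tree $T$ -- this is known for this family, and can alternatively be recovered by pairing the path-cover description of $\mr(L(T))$ with a matching zero forcing set; and second, that this common value is realized by a $0,1$-matrix in $\S(L(T))$, equivalently by $A_{L(T)}+D$ for some $0,1$ diagonal $D$. For the second ingredient I would exploit the block structure of $L(T)$: a vertex $v$ of $T$ of degree $d$ contributes a clique on the $d$ edges at $v$, and $L(T)$ is exactly the graph obtained by gluing these cliques along single vertices, with each edge of $T$ lying in at most two of them. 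On each clique the all-ones matrix has rank one; assembling these rank-one blocks and then choosing, at every cut vertex where two cliques overlap, a diagonal entry in $\{0,1\}$ so as not to waste rank, one obtains a $0,1$-matrix in $\S(L(T))$ whose rank matches the value predicted by the cut-vertex reduction formula for minimum rank.

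I expect the technical heart -- and the main obstacle -- to be this second step. The family $\S(L(T))$ permits arbitrary real off-diagonal weights, so one must verify that restricting to the adjacency pattern with an integer diagonal does not increase the minimum rank; it is precisely the clique-tree structure of line graphs of trees, together with careful bookkeeping of the overlaps at the cut vertices, that makes the integral $0,1$ realization available, just as Hogben's $0,1$-realization results were the engine behind the tree, cycle, and Petersen cases above. Once such a realization is in hand, the equality of all the listed parameters is immediate from Proposition~\ref{crprop}.
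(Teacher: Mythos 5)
Your overall strategy is exactly the paper's: reduce everything to Proposition~\ref{crprop} by exhibiting an integer vector ${\bf d}$ with $\rank(L(G,{\bf d}))=\mz(G)$. The paper's entire proof consists of citing \cite[Corollaries 2.10 and 2.11]{DGHMR09} for the existence of such a ${\bf d}$ (a universally optimal matrix for $L(T)$) and then invoking Proposition~\ref{crprop}. You instead try to manufacture the optimal integer matrix by hand from the clique structure of $L(T)$, and this is where the proposal stops short of a proof: the step you yourself flag as ``the technical heart'' is only described, not carried out.

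Two concrete issues with the sketch. First, ``assembling these rank-one blocks'' is dangerous as stated: if you sum the all-ones block $J_v$ over \emph{all} vertices $v$ of $T$ you get $NN\trans$, where $N$ is the unoriented incidence matrix of $T$; for a tree on $n$ vertices this has rank $n-1=|V(L(T))|$, i.e.\ it is a \emph{full-rank} member of $\S(L(T))$, the opposite of what you want. The construction only has a chance if you sum over the internal (degree $\geq 2$) vertices only, which bounds the rank by the number of internal vertices of $T$; making the ``choose diagonal entries so as not to waste rank'' step precise is exactly the content of the cited corollaries. Second, even granting the construction, Proposition~\ref{crprop} needs the \emph{equality} $\rank(L(G,{\bf d}))=\mz(G)$, so you still owe the matching lower bound that $\mz(L(T))$ (equivalently $\mr(L(T))$, which you take as known to coincide with it) is at least the rank of the matrix you built; appealing to ``the cut-vertex reduction formula'' names the tool but does not apply it. In short: same route as the paper, but the one nontrivial ingredient --- which the paper settles by citation --- is left as an uncompleted sketch, and the sketch as literally written would produce a full-rank matrix unless corrected.
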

\begin{proof}
By \cite[Corollary 2.10 and Corollary 2.11]{DGHMR09}, there is a vector ${\bf d}$ such that $\rank(A+\diag({\bf d}))=\mr(G)=\mz(G)$.  By applying Proposition~\ref{crprop} to $L(G,-{\bf d})$, we get the desired results.
\end{proof}

\section{Graph classes for bounded $\mz$, $\mr$ and $\gamma$}\label{section:graphclasses}

It is known that algebraic co-rank, minimum rank and $\mz$ are monotone on induced subgraphs.
Then, it is natural to ask for classifications of graphs where these parameters are bounded from above.
In this section we explore some relations between previous characterization, and extend one for directed graphs.
First, we recall the following result.
\begin{lem}
\label{monotone}
{\rm \cite{AIMZmr,corrval}}
Let $H$ be an induced subgraph of $G$. Then,
\begin{enumerate}[label={\rm (\arabic*)}]
    \item $\gamma_\mathcal{R}(H)\leq \gamma_\mathcal{R}(G)$,
    \item $\mr_\mathcal{R}(H)\leq \mr_\mathcal{R}(G)$,
    \item $\mz(H)\leq \mz(G)$.
\end{enumerate}
\end{lem}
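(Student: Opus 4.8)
The plan is to reduce all three inequalities to the case of deleting a single vertex: any induced subgraph $H$ of $G$ arises by deleting the vertices of $V(G)\setminus V(H)$ one at a time, and each intermediate graph is an induced subgraph of the preceding one, so it suffices to prove $\gamma_\mathcal{R}(G-v)\leq\gamma_\mathcal{R}(G)$, $\mr_\mathcal{R}(G-v)\leq\mr_\mathcal{R}(G)$, and $\mz(G-v)\leq\mz(G)$ for a single vertex $v$, then iterate. Fix $v$ and write $H=G-v$.

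For (1), the key observation is that, once we identify $X_H$ with the subset $X_G\setminus\{x_v\}$ of $X_G$, the hypothesis that $H$ is \emph{induced} makes $L(H,X_H)$ exactly the principal submatrix of $L(G,X_G)$ on the rows and columns indexed by $V(H)$. Hence every $i\times i$ minor of $L(H,X_H)$ is also an $i\times i$ minor of $L(G,X_G)$, so ${\rm minors}_i(L(H,X_H))\subseteq {\rm minors}_i(L(G,X_G))$ inside $\mathcal{R}[X_G]$, and consequently $I_i(H,X_H)\,\mathcal{R}[X_G]\subseteq I_i(G,X_G)$. If $\gamma_\mathcal{R}(H)=k$, then $I_k(H,X_H)=\langle1\rangle$ in $\mathcal{R}[X_H]$; extending to $\mathcal{R}[X_G]$ keeps $\langle1\rangle=\langle1\rangle$, so $\langle1\rangle\subseteq I_k(G,X_G)$, i.e. $I_k(G,X_G)$ is trivial and $\gamma_\mathcal{R}(G)\geq k$. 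For (2), take $A\in\S_\mathcal{R}(G)$ with $\rank(A)=\mr_\mathcal{R}(G)$ and let $A'$ be the principal submatrix of $A$ on $V(H)$; since $H$ is induced, the off-diagonal zero/nonzero pattern of $A'$ is precisely the adjacency pattern of $H$, so $A'\in\S_\mathcal{R}(H)$. Every $k\times k$ submatrix of $A'$ is a $k\times k$ submatrix of $A$, so a non-zero-divisor $k$-minor of $A'$ is one of $A$; thus $\rank(A')\leq\rank(A)$ and $\mr_\mathcal{R}(H)\leq\rank(A')\leq\mr_\mathcal{R}(G)$.

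For (3), it suffices to show $Z(H)\geq Z(G)-1$, since then $\mz(H)=|V(H)|-Z(H)\leq(|V(G)|-1)-(Z(G)-1)=\mz(G)$. Let $B$ be a minimum zero forcing set of $H=G-v$; the claim is that $B\cup\{v\}$ is a zero forcing set of $G$. Run on $G$, starting from $B\cup\{v\}$, the same chronological list $(x_i\to y_i)$ that colours all of $V(H)$ from $B$ in $H$, and check inductively that each force remains legal in $G$: at the relevant step the white vertices of $G$ coincide with the white vertices of $H$ (because $v$ is blue from the start), and since $H$ is induced the white neighbours of $x_i$ in $G$ are exactly its white neighbours in $H$, namely $\{y_i\}$. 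Hence every vertex of $V(H)$ turns blue and $v$ is already blue, so $B\cup\{v\}$ forces $G$ and $Z(G)\leq|B|+1=Z(H)+1$.

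The substantive content sits in (3): parts (1) and (2) are ``submatrix of a submatrix'' observations, the only mild point being the change of coefficient ring $\mathcal{R}[X_H]\subseteq\mathcal{R}[X_G]$ in (1) and the fact that the induced hypothesis is what guarantees the relevant submatrix lies in $\S_\mathcal{R}(H)$ with the correct pattern. The step that actually requires an argument is the zero-forcing inequality $Z(G-v)\geq Z(G)-1$, i.e. verifying that pre-colouring the deleted vertex can never destroy a force; this is precisely where the structure of the color change rule (a blue vertex is never the unique white neighbour of anyone) and the ``induced'' hypothesis are used, and it is the place I would expect a careful write-up to spend the most effort.
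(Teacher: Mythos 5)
Your proof is correct. Note that the paper offers no proof of this lemma at all — it is stated as a known result imported from the cited references \cite{AIMZmr,corrval} — so there is nothing internal to compare against; your three arguments (minors of a principal submatrix generate a subideal, restriction of a minimum-rank witness to $V(H)$ stays in $\S_\mathcal{R}(H)$ because $H$ is induced, and $B\cup\{v\}$ forces $G$ whenever $B$ forces $G-v$) are precisely the standard proofs found in those references, and the reduction to single-vertex deletion is sound.
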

Given a family of graphs $\mathfrak{F}$, a graph $G$ is called $\mathfrak{F}$-{\it free} if no induced subgraph of $G$ is isomorphic to a member of $\mathfrak{F}$.
Since $\mz(G)\leq \gamma_{\mathcal{R}}(G)$ and $\mz(G)\leq \mr_{\mathcal{R}}(G)$, then the family of graphs with $\gamma_{\mathcal{R}}(G)\leq k$ or $\mr_{\mathcal{R}}(G)\leq k$ are contained in the family of graphs with $\mz(G)\leq k$.
In previous works, it was noticed that among connected graphs $K_n$ is the unique graph with minimum rank, algebraic co-rank and $\mz$ equal to 1.

\begin{thm}{\rm \cite{alfaval,BHL04}}
\label{lemma:mzandmrandgamma=1}
    Let $G$ be a connected graph and $\mathcal{R}$ a commutative ring with unity.
    Then, the following are equivalent:
    \begin{enumerate}[label={\rm (\arabic*)}]
        \item $G$ is the complete graph,
        \item $G$ is $P_3$-free,
        \item $\mr_\mathcal{R}(G)\leq 1$,
        \item $\gamma_\mathcal{R}(G)\leq 1$,
        \item $\mz(G)\leq 1$.
    \end{enumerate}
\end{thm}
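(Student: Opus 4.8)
The plan is to prove the equivalences around the cycle
\[
(2)\Rightarrow(1)\Rightarrow(3)\Rightarrow(5)\Rightarrow(2),\qquad\text{together with }(1)\Rightarrow(4)\Rightarrow(5),
\]
so that all five statements collapse into a single equivalence class. The combinatorial content is twofold: the complete graph $K_n$ realizes the value $1$ in each of $\mr_\mathcal{R}$, $\gamma_\mathcal{R}$ and $\mz$, while a single induced $P_3$ already forces each of these parameters to be at least $2$; the monotonicity of Lemma~\ref{monotone} then transfers the latter to every non-complete connected graph. I would first dispatch $(1)\Leftrightarrow(2)$, which is elementary: $K_n$ has no induced $P_3$, and if $G$ is connected but not complete then a shortest path between two non-adjacent vertices has length at least $2$, and its first three vertices induce a $P_3$.

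Next I would verify $(1)\Rightarrow(3)$, $(1)\Rightarrow(4)$ and $(1)\Rightarrow(5)$ for $G=K_n$, the case $n=1$ being trivial, so assume $n\ge2$. For $(3)$, the all-ones matrix $J_n$ lies in $\S_\mathcal{R}(K_n)$ (every off-diagonal entry is $1\neq0$) and has $\rank_\mathcal{R}(J_n)=1$, since $1$ is a $1\times1$ minor that is not a zero divisor while every $2\times2$ minor of $J_n$ vanishes; hence $\mr_\mathcal{R}(K_n)\le1$. For $(5)$, colouring all vertices but one blue forces the remaining vertex, so $Z(K_n)=n-1$ and $\mz(K_n)=1$. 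For $(4)$, the off-diagonal entry $-1$ of $L(K_n,X_G)$ shows $I_1(K_n,X_G)=\langle1\rangle$, hence $\gamma_\mathcal{R}(K_n)\ge1$; on the other hand, evaluating $L(K_n,X_G)$ at ${\bf a}=(-1,\dots,-1)$ gives $-J_n$, all of whose $2\times2$ minors equal $0$, so an identity $1=\sum_i h_i p_i$ with the $p_i$ among the $2\times2$ minors of $L(K_n,X_G)$ would yield $1=\sum_i h_i({\bf a})\cdot0=0$ upon evaluation at ${\bf a}$ — impossible — whence $I_2(K_n,X_G)\neq\langle1\rangle$ and $\gamma_\mathcal{R}(K_n)=1$.

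To close the loop I would use the inequalities already at hand: $(4)\Rightarrow(5)$ follows from Theorem~\ref{gammaZthm}, since $\mz(G)\le\gamma_\mathcal{R}(G)\le1$; and $(3)\Rightarrow(5)$ follows from the bound $\mz(G)\le\mr_\mathcal{R}(G)$ recorded in Figure~\ref{paramdiag}. Finally, for $(5)\Rightarrow(2)$: if $G$ contained an induced $P_3$, then part (3) of Lemma~\ref{monotone} would give $\mz(G)\ge\mz(P_3)=2$, because $Z(P_3)=1$ (one endpoint forces the whole path), contradicting $(5)$; hence $(5)$ forces $G$ to be $P_3$-free. Chaining these implications with $(1)\Leftrightarrow(2)$ yields the equivalence of $(1)$--$(5)$.

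I expect the main obstacle to be the minimum-rank clause over an arbitrary commutative ring with unity. Over a field or over $\mathbb{Z}$ one checks both $\mz(G)\le\mr_\mathcal{R}(G)$ and the alternative bound $\mr_\mathcal{R}(P_3)\ge2$ directly — a matrix in $\S(P_3)$ has, as a $2\times2$ minor, the product of its two nonzero off-diagonal entries, which is nonzero and hence not a zero divisor — but over rings with zero divisors such products may themselves be zero divisors and $\mr_\mathcal{R}(P_3)\ge2$ can fail, so for the statement in that full generality I would lean on the classification of graphs with $\mr_\mathcal{R}\le1$ from \cite{BHL04} (and on \cite{alfaval} for the critical-ideal clause). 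The portion of the theorem involving only $\mz$ and $\gamma_\mathcal{R}$, by contrast, is robust: it holds over every commutative ring with unity via $Z(P_3)=1$, the $2\times2$ minor of $L(P_3,X_G)$ equal to $1$ (so that $I_2(P_3,X_G)=\langle1\rangle$ and $\gamma_\mathcal{R}(P_3)\ge2$), and Lemma~\ref{monotone}.
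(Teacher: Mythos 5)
The paper gives no proof of this theorem at all --- it is stated as a citation to \cite{alfaval,BHL04} --- so there is no internal argument to compare against, and your reconstruction is the only proof on the table. The parts of your argument involving (1), (2), (4) and (5) are correct and genuinely work over every nontrivial commutative ring with unity: the shortest-path argument for $(2)\Leftrightarrow(1)$, the computation $\gamma_\mathcal{R}(K_n)=1$ via evaluation at $(-1,\dots,-1)$, the use of Theorem~\ref{gammaZthm} for $(4)\Rightarrow(5)$, and $\mz(P_3)=2$ together with Lemma~\ref{monotone}(3) for $(5)\Rightarrow(2)$ are all sound.

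The genuine gap is the integration of clause (3), and you have correctly smelled where it is but not closed it. Your step $(3)\Rightarrow(5)$ invokes $\mz(G)\le\mr_\mathcal{R}(G)$, which the paper records (via \cite{AIMZmr,cancun}) only for fields; Figure~\ref{paramdiag} asserts it for rings but nothing in the paper proves it there. Your fallback --- bounding $\mr_\mathcal{R}(P_3)\ge 2$ by the product of the two off-diagonal entries --- fails exactly as you fear, and the failure is not repairable: over $\mathcal{R}=\mathbb{Z}/4\mathbb{Z}$ the matrix in $\S_\mathcal{R}(P_3)$ with zero diagonal and both off-diagonal entries equal to $2$ has every $1\times 1$ minor equal to $0$ or to the zero divisor $2$, and every $2\times 2$ and $3\times 3$ minor equal to $0$, so its rank under the paper's definition is $0$ and hence $\mr_{\mathbb{Z}/4\mathbb{Z}}(P_3)=0\le 1$ while $P_3$ is not complete. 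Thus clause (3) is actually false for arbitrary commutative rings with unity, and deferring to \cite{BHL04} does not help, since that reference classifies minimum rank over fields only. This defect is inherited from the theorem statement itself (the paper overgeneralizes its field-based sources), but as written your proof does not, and cannot, establish the equivalence of (3) with the other clauses at the stated level of generality; restricting (3) to fields (or integral domains, where the off-diagonal-product argument does give $\mr_\mathcal{R}(P_3)\ge 2$) would make your argument complete.
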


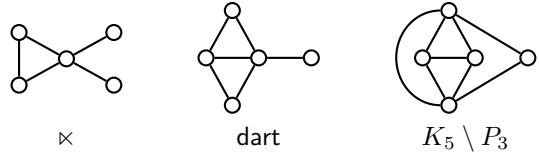
\begin{figure}[h]
\begin{center}
\begin{tabular}{c@{\extracolsep{10mm}}c@{\extracolsep{10mm}}c@{\extracolsep{10mm}}c@{\extracolsep{10mm}}c}
	\begin{tikzpicture}[rotate=-90,scale=.7]
	\tikzstyle{every node}=[minimum width=0pt, inner sep=2pt, circle]
	\draw (-.5,-.9) node (v1) [draw] {};
	\draw (.5,-.9) node (v2) [draw] {};
	\draw (0,0) node (v3) [draw] {};
	\draw (-.5,.9) node (v4) [draw] {};
	\draw (.5,.9) node (v5) [draw] {};
	\draw (v1) -- (v2);
	\draw (v1) -- (v3);
	\draw (v2) -- (v3);
	\draw (v3) -- (v4);
	\draw (v3) -- (v5);
    \node[draw=none] at (0.9,0) {};
	\end{tikzpicture}
&
	\begin{tikzpicture}[scale=.7]
	\tikzstyle{every node}=[minimum width=0pt, inner sep=2pt, circle]
	\draw (-.5,0) node (v2) [draw] {};
	\draw (0,-.9) node (v1) [draw] {};
	\draw (.5,0) node (v3) [draw] {};
	\draw (1.5,0) node (v5) [draw] {};
	\draw (0,.9) node (v4) [draw] {};
	\draw (v1) -- (v2);
	\draw (v1) -- (v3);
	\draw (v2) -- (v3);
	\draw (v2) -- (v4);
	\draw (v3) -- (v4);
	\draw (v3) -- (v5);
	\end{tikzpicture}
    &
	\begin{tikzpicture}[scale=.7]
	\tikzstyle{every node}=[minimum width=0pt, inner sep=2pt, circle]
	\draw (-.5,0) node (v2) [draw] {};
	\draw (0,-.9) node (v1) [draw] {};
	\draw (.5,0) node (v3) [draw] {};
	\draw (1.5,0) node (v5) [draw] {};
	\draw (0,.9) node (v4) [draw] {};
	\draw (v1) -- (v2);
	\draw (v1) -- (v3);
	\draw (v2) -- (v3);
	\draw (v2) -- (v4);
	\draw (v3) -- (v4);
	\draw (v4) -- (v5);
    \draw (v1) -- (v5);
    \draw (v4) to [out=180,in=90] ($(v2)+(-0.5,0)$) to [out=-90, in=180 ] (v1);
	\end{tikzpicture}
\\
$\ltimes$
&
{\sf dart}
&
$K_5\setminus{P_3}$
\end{tabular}
\end{center}
\caption{The graphs $\ltimes$, ${\sf dart}$ and $K_5\setminus{P_3}$}
\label{fig2}
\end{figure}


This confirms Conjecture \ref{conjecture:mrleqgamma} for graphs with $\mr_{\mathbb{R}}(G)\leq 2$.

\begin{theorem}
    If $G$ is a connected graph such that $\mr_{\mathbb{R}}(G)\leq 2$, then $\mr_{\mathbb{R}}(G)\leq\gamma_{\mathbb{R}}(G)$.
\end{theorem}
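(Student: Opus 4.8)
The plan is to reduce everything to the characterization already recorded in Theorem~\ref{lemma:mzandmrandgamma=1} and to split into cases according to the value of $\mr_{\mathbb{R}}(G)$, which by hypothesis lies in $\{0,1,2\}$.

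First I would dispose of the case $\mr_{\mathbb{R}}(G)\le 1$. By Theorem~\ref{lemma:mzandmrandgamma=1}, a connected graph $G$ with $\mr_{\mathbb{R}}(G)\le 1$ is complete, and for a complete graph $K_n$ one has $\mr_{\mathbb{R}}(K_n)=\mz(K_n)$ (both equal $1$ when $n\ge 2$, and both equal $0$ when $n=1$). Since $\mz(G)\le\gamma_{\mathbb{R}}(G)$ by Theorem~\ref{gammaZthm}, we obtain $\mr_{\mathbb{R}}(G)=\mz(G)\le\gamma_{\mathbb{R}}(G)$ in this case.

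The remaining, and main, case is $\mr_{\mathbb{R}}(G)=2$. Every connected complete graph has minimum rank at most $1$, so $G$ cannot be complete. Applying the equivalence ``$G$ is complete $\iff\gamma_{\mathbb{R}}(G)\le 1$'' from Theorem~\ref{lemma:mzandmrandgamma=1} in its contrapositive form, we get $\gamma_{\mathbb{R}}(G)\ge 2=\mr_{\mathbb{R}}(G)$, which is exactly the desired inequality. Combining the two cases completes the argument.

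I do not expect a genuine obstacle: the whole proof is a direct appeal to Theorem~\ref{lemma:mzandmrandgamma=1}, with no computation. The only point that needs a little care is the trivial-looking subcase $\mr_{\mathbb{R}}(G)\le 1$, where one should resist writing the tempting but false bound $\mr_{\mathbb{R}}(G)\le 1\le\gamma_{\mathbb{R}}(G)$ (this fails for $G=K_1$, where $\gamma_{\mathbb{R}}(K_1)=0$); routing through the identity $\mr_{\mathbb{R}}(K_n)=\mz(K_n)$ together with Theorem~\ref{gammaZthm} sidesteps this.
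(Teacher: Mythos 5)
Your proof is correct and follows essentially the same route as the paper's: both arguments reduce to the equivalences in Theorem~\ref{lemma:mzandmrandgamma=1}, handling the complete-graph case separately and observing that $\mr_{\mathbb{R}}(G)=2$ forces $\gamma_{\mathbb{R}}(G)\geq 2$ since $G$ is then not complete. Your extra care with the $K_1$ subcase (routing through $\mr_{\mathbb{R}}(K_n)=\mz(K_n)\leq\gamma_{\mathbb{R}}(K_n)$ rather than the false bound $1\leq\gamma_{\mathbb{R}}(G)$) is a small refinement over the paper's statement that $\mr_{\mathbb{R}}(G)=\gamma_{\mathbb{R}}(G)=1$ in that case, but does not change the substance of the argument.
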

\begin{proof}
    Let $G$ be a connected graph with $\mr_{\mathbb{R}}(G)\leq 2$.
    If $G$ has $\gamma_{\mathbb{R}}(G)\leq 1$, then $G$ is a complete graph.
    Then, by Theorem~\ref{lemma:mzandmrandgamma=1}, $\mr_{\mathbb{R}}(G)=\gamma_{\mathbb{R}}(G)=1$.
    On the other hand, if $G$ has $\mr_{\mathbb{R}}(G)= 2$, then $\gamma_{\mathbb{R}}(G)\geq2$.
\end{proof}

Finally, we turn our attention to simple digraphs, where loops and multiedges are not allowed.  The concept of the algebraic co-rank , the minimum rank, and the zero forcing number extend to digraphs naturally \cite{corrval,cancun} with slightly modifications.  Let $D$ be a digraph on $n$ vertices. 
The generalized Laplacian matrix $L(D,X_D)$ is defined in the same way as that of a simple graph, except that $m_{uv}$ is the number of arcs going from $u$ to $v$.  
For the minimum rank of a digraph, the family $\S_\mathcal{R}(G)$ consists of all $n\times n$ matrices with entries in the ring $\mathcal{R}$ whose $i,j$-entry ($i\neq j$) is nonzero whenever $(i,j)$ is an arc and zero otherwise.
For zero forcing number of a digraph, the color change rule is applied in $y$ when $y$ is the only {\it out-neighbor} of the blue vertex $x$.  By defining these concepts, Lemma~\ref{monotone} can be extended to digraphs.  That is, if $D_1$ is an induced subdigraph of $D_2$, then $\gamma_\mathcal{R}(D_1)\leq \gamma_\mathcal{R}(D_2)$, $\mr_\mathcal{R}(D_1)\leq\mr_\mathcal{R}(D_2)$, and $\mz(D_1)\leq \mz (D_2)$.

In the rest of this section, we contribute by extending a classification of digraphs with at most 1 trivial critical ideal, obtained in \cite{alfavalvaz}, by including the minimum rank and the parameter $\mz(D)$.


\begin{figure}[h!]
\begin{center}
\begin{tikzpicture}[scale=.7,>=stealth',shorten >=1pt,bend angle = 15]
 	\tikzstyle{every node}=[minimum width=0pt, inner sep=2pt, circle]
 	\draw (30:1.5) node[draw, fill] (1) {\bf\color{white} $K_{n_2}$};
 	\draw (150:1.5) node[draw] (2) {$T_{n_1}$};
 	\draw (270:1.5) node[draw] (3) {$T'_{n_3}$};
 	\draw[->] (1) -- (3);
 	\draw[->] (2) -- (1);
 	\draw[->] (2) -- (3);
 \end{tikzpicture}
\end{center}
\caption{The digraph $\Lambda_{n_1,n_2,n_3}$}
\label{figure:lambda}
\end{figure}
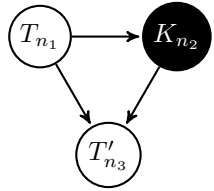
 
Let $\Lambda_{n_1,n_2,n_3}$ be the digraph defined in the following way:
The vertex set $V(\Lambda_{n_1,n_2,n_3})$ is partitioned in three sets $T$, $T'$ and $K$ with $n_1,n_3$ and $n_2$ vertices, respectively, 
such that $T$ and $T'$ are two trivial digraphs (which have no arcs), and $K$ is a complete digraph (which has double arcs between each pair of vertices).
Additionally, the arc sets $(T,K)_{\Lambda_{n_1,n_2,n_3}}$, $(T,T')_{\Lambda_{n_1,n_2,n_3}}$ and $({K},T')_{\Lambda_{n_1,n_2,n_3}}$ are complete.
See Figure~\ref{figure:lambda} for a graphical representation of $\Lambda_{n_1,n_2,n_3}$.

Let $\mathfrak{F}$ be the family of the seventeen digraphs.  

\begin{figure}[h!]
\[
   \begin{tikzpicture}[scale=.6,>=stealth',shorten >=1pt,bend angle = 15]
   \tikzstyle{every node}=[minimum width=0pt, inner sep=1pt, circle]
		\draw (180:1) node (v1) [draw,fill] {};
		\draw (300:1) node (v2) [draw] {};
		\draw (420:1) node (v3) [draw] {};
		\draw (-90:1.5) node (name) {\small $F_{3,1}$};
		\draw[->] (v1) edge (v3);
		\draw[->] (v3) edge (v2);
   \end{tikzpicture}
		\hspace{1cm}
   \begin{tikzpicture}[scale=.6,>=stealth',shorten >=1pt,bend angle = 15]
   \tikzstyle{every node}=[minimum width=0pt, inner sep=1pt, circle]
		\draw (180:1) node (v1) [draw] {};
		\draw (300:1) node (v2) [draw,fill] {};
		\draw (420:1) node (v3) [draw] {};
		\draw (-90:1.5) node (name) {\small $F_{3,2}$};
		\draw[->,bend right] (v1) edge (v3);
		\draw[->] (v2) edge (v3);
		\draw[->,bend right] (v3) edge (v1);
   \end{tikzpicture}
		\hspace{1cm}
   \begin{tikzpicture}[scale=.6,>=stealth',shorten >=1pt,bend angle = 15]
   \tikzstyle{every node}=[minimum width=0pt, inner sep=1pt, circle]
		\draw (180:1) node (v1) [draw,fill] {};
		\draw (300:1) node (v2) [draw] {};
		\draw (420:1) node (v3) [draw] {};
		\draw (-90:1.5) node (name) {\small $F_{3,3}$};
		\draw[->,bend right] (v1) edge (v3);
		\draw[->,bend right] (v3) edge (v1);
		\draw[->] (v3) edge (v2);
   \end{tikzpicture}
		\hspace{1cm}
   \begin{tikzpicture}[scale=.6,>=stealth',shorten >=1pt,bend angle = 15]
   \tikzstyle{every node}=[minimum width=0pt, inner sep=1pt, circle]
		\draw (180:1) node (v1) [draw,fill] {};
		\draw (300:1) node (v2) [draw] {};
		\draw (420:1) node (v3) [draw] {};
		\draw (-90:1.5) node (name) {\small $F_{3,4}$};
		\draw[->,bend right] (v1) edge (v3);
		\draw[->,bend right] (v2) edge (v3);
		\draw[->,bend right] (v3) edge (v1);
		\draw[->,bend right] (v3) edge (v2);
   \end{tikzpicture}
		\hspace{1cm}
   \begin{tikzpicture}[scale=.6,>=stealth',shorten >=1pt,bend angle = 15]
   \tikzstyle{every node}=[minimum width=0pt, inner sep=1pt, circle]
		\draw (180:1) node (v1) [draw,fill] {};
		\draw (300:1) node (v2) [draw] {};
		\draw (420:1) node (v3) [draw] {};
		\draw (-90:1.5) node (name) {\small $F_{3,5}$};
		\draw[->] (v1) edge (v2);
		\draw[->] (v2) edge (v3);
		\draw[->] (v3) edge (v1);
   \end{tikzpicture}
		\hspace{1cm}
   \begin{tikzpicture}[scale=.6,>=stealth',shorten >=1pt,bend angle = 15]
   \tikzstyle{every node}=[minimum width=0pt, inner sep=1pt, circle]
		\draw (180:1) node (v1) [draw] {};
		\draw (300:1) node (v2) [draw,fill] {};
		\draw (420:1) node (v3) [draw] {};
		\draw (-90:1.5) node (name) {\small $F_{3,6}$};
		\draw[->,bend right] (v1) edge (v2);
		\draw[->] (v1) edge (v3);
		\draw[->,bend right] (v2) edge (v1);
		\draw[->] (v3) edge (v2);
   \end{tikzpicture}
\]
\[
   \begin{tikzpicture}[scale=.6,>=stealth',shorten >=1pt,bend angle = 15]
   \tikzstyle{every node}=[minimum width=0pt, inner sep=1pt, circle]
		\draw (180:1) node (v1) [draw] {};
		\draw (300:1) node (v2) [draw] {};
		\draw (420:1) node (v3) [draw,fill] {};
		\draw (-90:1.5) node (name) {\small $F_{3,6}$};
		\draw[->,bend right] (v1) edge (v2);
		\draw[->,bend right] (v1) edge (v3);
		\draw[->,bend right] (v2) edge (v1);
		\draw[->] (v2) edge (v3);
		\draw[->,bend right] (v3) edge (v1);
   \end{tikzpicture}
		\hspace{1cm}
   \begin{tikzpicture}[scale=.6,>=stealth',shorten >=1pt,bend angle = 15]
   \tikzstyle{every node}=[minimum width=0pt, inner sep=1pt, circle]
		\draw (180:1) node (v1) [draw,fill] {};
		\draw (270:1) node (v3) [draw] {};
		\draw (360:1) node (v2) [draw,fill] {};
		\draw (450:1) node (v4) [draw] {};
		\draw (-90:1.5) node (name) {\small $F_{4,1}$};
		\draw[->] (v1) edge (v3);
		\draw[->] (v1) edge (v4);
		\draw[->] (v2) edge (v4);
   \end{tikzpicture}
		\hspace{1cm}
   \begin{tikzpicture}[scale=.6,>=stealth',shorten >=1pt,bend angle = 15]
   \tikzstyle{every node}=[minimum width=0pt, inner sep=1pt, circle]
		\draw (180:1) node (v1) [draw,fill] {};
		\draw (300:1) node (v2) [draw,fill] {};
		\draw (420:1) node (v3) [draw] {};
		\draw (0,0) node (v4) [draw] {};
		\draw (-90:1.5) node (name) {\small $F_{4,2}$};
		\draw[->] (v1) edge (v3);
		\draw[->] (v1) edge (v4);
		\draw[->] (v2) edge (v4);
		\draw[->] (v3) edge (v4);
   \end{tikzpicture}
		\hspace{1cm}
   \begin{tikzpicture}[scale=.6,>=stealth',shorten >=1pt,bend angle = 15]
   \tikzstyle{every node}=[minimum width=0pt, inner sep=1pt, circle]
		\draw (180:1) node (v1) [draw,fill] {};
		\draw (300:1) node (v2) [draw] {};
		\draw (420:1) node (v3) [draw] {};
		\draw (0,0) node (v4) [draw,fill] {};
		\draw (-90:1.5) node (name) {\small $F_{4,3}$};
		\draw[->] (v1) edge (v3);
		\draw[->] (v4) edge (v1);
		\draw[->] (v4) edge (v2);
		\draw[->] (v4) edge (v3);
   \end{tikzpicture}
		\hspace{1cm}
   \begin{tikzpicture}[scale=.6,>=stealth',shorten >=1pt,bend angle = 15]
   \tikzstyle{every node}=[minimum width=0pt, inner sep=1pt, circle]
		\draw (180:1) node (v1) [draw,fill] {};
		\draw (300:1) node (v2) [draw,fill] {};
		\draw (420:1) node (v3) [draw] {};
		\draw (300:0.2) node (v4) [draw] {};
		\draw (-90:1.5) node (name) {\small $F_{4,4}$};
		\draw[->,bend right] (v1) edge (v3);
		\draw[->] (v1) edge (v4);
		\draw[->] (v2) edge (v4);
		\draw[->,bend right] (v3) edge (v1);
		\draw[->] (v3) edge (v4);
   \end{tikzpicture}
		\hspace{1cm}
   \begin{tikzpicture}[scale=.6,>=stealth',shorten >=1pt,bend angle = 15]
   \tikzstyle{every node}=[minimum width=0pt, inner sep=1pt, circle]
		\draw (180:1) node (v1) [draw,fill] {};
		\draw (300:1) node (v2) [draw] {};
		\draw (420:1) node (v3) [draw] {};
		\draw (300:0.2) node (v4) [draw,fill] {};
		\draw (-90:1.5) node (name) {\small $F_{4,5}$};
		\draw[->,bend right] (v1) edge (v3);
		\draw[->,bend right] (v3) edge (v1);
		\draw[->] (v4) edge (v1);
		\draw[->] (v4) edge (v2);
		\draw[->] (v4) edge (v3);
   \end{tikzpicture}
\]
\[
   \begin{tikzpicture}[scale=.6,>=stealth',shorten >=1pt,bend angle = 15]
   \tikzstyle{every node}=[minimum width=0pt, inner sep=1pt, circle]
		\draw (180:1) node (v1) [draw,fill] {};
		\draw (300:1) node (v2) [draw] {};
		\draw (420:1) node (v3) [draw] {};
		\draw (0,0) node (v4) [draw,fill] {};
		\draw (-90:1.5) node (name) {\small $F_{4,6}$};
		\draw[->] (v1) edge (v3);
		\draw[->] (v2) edge (v3);
		\draw[->] (v4) edge (v1);
		\draw[->] (v4) edge (v2);
		\draw[->] (v4) edge (v3);
   \end{tikzpicture}
		\hspace{1cm}
   \begin{tikzpicture}[scale=.6,>=stealth',shorten >=1pt,bend angle = 15]
   \tikzstyle{every node}=[minimum width=0pt, inner sep=1pt, circle]
		\draw (180:1) node (v1) [draw,fill] {};
		\draw (300:1) node (v2) [draw] {};
		\draw (420:1) node (v3) [draw,fill] {};
		\draw (0,0) node (v4) [draw] {};
		\draw (-90:1.5) node (name) {\small $F_{4,7}$};
		\draw[->] (v1) edge (v2);
		\draw[->] (v1) edge (v3);
		\draw[->] (v1) edge (v4);
		\draw[->] (v2) edge (v3);
		\draw[->] (v2) edge (v4);
		\draw[->] (v3) edge (v4);
   \end{tikzpicture}
		\hspace{1cm}
   \begin{tikzpicture}[scale=.6,>=stealth',shorten >=1pt,bend angle = 15]
   \tikzstyle{every node}=[minimum width=0pt, inner sep=1pt, circle]
		\draw (180:1) node (v1) [draw,fill] {};
		\draw (300:1) node (v2) [draw] {};
		\draw (420:0.2) node (v3) [draw,fill] {};
		\draw (420:1) node (v4) [draw] {};
		\draw (-90:1.5) node (name) {\small $F_{4,8}$};
		\draw[->,bend right] (v1) edge (v2);
		\draw[->] (v1) edge (v3);
		\draw[->] (v1) edge (v4);
		\draw[->,bend right] (v2) edge (v1);
		\draw[->] (v2) edge (v3);
		\draw[->] (v2) edge (v4);
		\draw[->] (v3) edge (v4);
   \end{tikzpicture}
		\hspace{1cm}
   \begin{tikzpicture}[scale=.6,>=stealth',shorten >=1pt,bend angle = 15]
   \tikzstyle{every node}=[minimum width=0pt, inner sep=1pt, circle]
		\draw (180:1) node (v1) [draw,fill] {};
		\draw (270:1) node (v2) [draw] {};
		\draw (360:1) node (v3) [draw] {};
		\draw (450:1) node (v4) [draw,fill] {};
		\draw (-90:1.5) node (name) {\small $F_{4,9}$};
		\draw[->,bend right] (v1) edge (v2);
		\draw[->] (v1) edge (v3);
		\draw[->] (v1) edge (v4);
		\draw[->,bend right] (v2) edge (v1);
		\draw[->] (v2) edge (v3);
		\draw[->] (v2) edge (v4);
		\draw[->,bend right] (v3) edge (v4);
		\draw[->,bend right] (v4) edge (v3);
   \end{tikzpicture}
		\hspace{1cm}
   \begin{tikzpicture}[scale=.6,>=stealth',shorten >=1pt,bend angle = 15]
   \tikzstyle{every node}=[minimum width=0pt, inner sep=1pt, circle]
		\draw (180:1) node (v1) [draw,fill] {};
		\draw (300:1) node (v2) [draw] {};
		\draw (420:1) node (v3) [draw,fill] {};
		\draw (420:0.2) node (v4) [draw] {};
		\draw (-90:1.5) node (name) {\small $F_{4,10}$};
		\draw[->,bend right] (v1) edge (v2);
		\draw[->,bend right] (v2) edge (v1);
		\draw[->] (v3) edge (v1);
		\draw[->] (v3) edge (v2);
		\draw[->] (v3) edge (v4);
		\draw[->] (v4) edge (v1);
		\draw[->] (v4) edge (v2);
   \end{tikzpicture}
\]
\caption{Seventeen digraphs with algebraic co-rank equal to $2$, where the filled vertices mark a zero forcing set for each graph}
\label{fig:ForbDig1}
\end{figure}
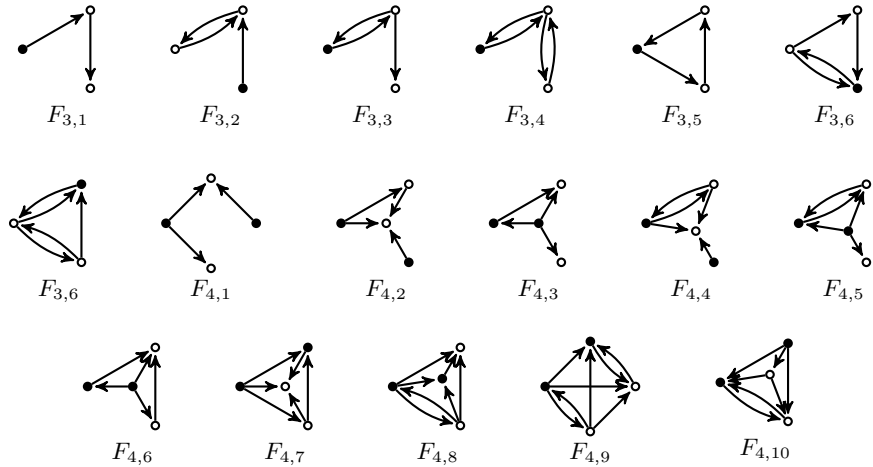 

\begin{thm}
\label{digraph1}
Let $\mathcal{R}$ be a commutative ring with unity.
The following are equivalent:
\begin{enumerate}[label={\rm (\arabic*)}]
\item $D$ is $\mathfrak{F}$-free,
\item $D$ is isomorphic to $\Lambda_{n_1,n_2,n_3}$,
\item $\mr_{\mathcal{R}}(D)\leq 1$,
\item $\mz(D)\leq 1$,
\item $\gamma_{\mathcal{R}}(D)\leq 1$.
\end{enumerate}
\end{thm}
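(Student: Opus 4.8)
The plan is to treat the equivalence of (1), (2), and (5) as already settled — this is exactly the classification of digraphs with at most one trivial critical ideal proved in \cite{alfavalvaz} — and to splice (3) and (4) into that chain by establishing $(2)\Rightarrow(3)\Rightarrow(4)\Rightarrow(1)$ together with $(5)\Rightarrow(4)$. Since $(4)\Rightarrow(1)\Rightarrow(2)\Rightarrow(5)$ then closes the remaining gap, all five statements become equivalent.

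For $(2)\Rightarrow(3)$ I would produce an explicit rank-one matrix. Write $D\cong\Lambda_{n_1,n_2,n_3}$ with vertex parts $T$, $K$, $T'$, and define $u\in\mathcal{R}^n$ to be $1$ on $T\cup K$ and $0$ on $T'$, and $v\in\mathcal{R}^n$ to be $1$ on $K\cup T'$ and $0$ on $T$. Put $M=uv\trans$. Comparing the $i\ne j$ entries of $M$ with the arcs of $\Lambda_{n_1,n_2,n_3}$ — the one-way blocks $T\to K$, $T\to T'$, $K\to T'$, the double arcs inside $K$, and the absence of arcs inside $T$ and inside $T'$ — shows $M\in\S_\mathcal{R}(D)$; and since every $2\times 2$ minor of a matrix of the form $uv\trans$ is identically zero, $\rank(M)\le 1$, so $\mr_\mathcal{R}(D)\le 1$. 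The very same vectors give $L(D,X_D)$ evaluated at ${\bf d}$ equal to $0$ on $T\cup T'$ and $-1$ on $K$, namely $L(D,{\bf d})=-uv\trans$, so $I_2(D,X_D)$ evaluated at ${\bf d}$ is the zero ideal; this reproves $\gamma_\mathcal{R}(D)\le 1$ directly, so if desired one can avoid borrowing $(2)\Rightarrow(5)$ from \cite{alfavalvaz}.

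Next, $(3)\Rightarrow(4)$ is the digraph form of the inequality $\mz(D)\le\mr_\mathcal{R}(D)$ recorded in Figure~\ref{paramdiag} (for digraphs see Remark~\ref{digraphrem} and \cite{cancun}), so $\mr_\mathcal{R}(D)\le 1$ forces $\mz(D)\le 1$; and $(5)\Rightarrow(4)$ is immediate from $\mz(D)\le\gamma_\mathcal{R}(D)$ (Remark~\ref{digraphrem}). For $(4)\Rightarrow(1)$ I would invoke the monotonicity of $\mz$ on induced subdigraphs (the digraph extension of Lemma~\ref{monotone}): if $D$ had an induced copy of some $F\in\mathfrak{F}$ we would get $\mz(F)\le\mz(D)\le 1$, contradicting $\mz(F)\ge 2$. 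The bound $\mz(F)\ge 2$ for each of the seventeen members of $\mathfrak{F}$ is precisely what the filled vertices in Figure~\ref{fig:ForbDig1} witness — in every case they form a zero forcing set of cardinality $|V(F)|-2$, hence $Z(F)\le|V(F)|-2$ — and confirming this is just a finite check of the color-change rule on small digraphs.

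The only genuinely substantial ingredient is the structural equivalence $(1)\Leftrightarrow(2)$: that a digraph with no induced member of $\mathfrak{F}$ must split its vertices into an independent ``source'' part $T$, a bidirected clique $K$, and an independent ``sink'' part $T'$ with all cross-arcs present and oriented $T\to K$, $K\to T'$, $T\to T'$. This is the forbidden-subdigraph structure theorem from \cite{alfavalvaz}, which I would cite; every other step above is either one line from a previously displayed inequality or a bounded verification. If one wanted a self-contained proof, reproving $(1)\Rightarrow(2)$ would be the main obstacle, since it requires the usual — but delicate — argument that peels off vertices while maintaining $\mathfrak{F}$-freeness.
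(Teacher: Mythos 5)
Your proposal is correct and follows essentially the same route as the paper: cite \cite{alfavalvaz} for the structural equivalence, exhibit the rank-one matrix (your $uv\trans$ is exactly the paper's block matrix $\left[\begin{smallmatrix}O&J&J\\O&J&J\\O&O&O\end{smallmatrix}\right]$, and your observation that $L(D,{\bf d})=-uv\trans$ is the paper's appeal to Proposition~\ref{crprop}) to get $(2)\Rightarrow(3),(5)$, use $\mz\le\mr_\mathcal{R}$ and $\mz\le\gamma_\mathcal{R}$ for $(3),(5)\Rightarrow(4)$, and close the cycle via the zero forcing sets of the seventeen forbidden digraphs for $(4)\Rightarrow(1)$. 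Your remark that $(2)\Rightarrow(5)$ should be reproved directly for general $\mathcal{R}$ (since the citation only covers $\gamma_\mathbb{Z}$) is a point the paper handles the same way, just less explicitly.
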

\begin{proof}
In \cite{alfavalvaz}, it was proven that (1), (2), and $\gamma_\mathbb{Z}(D)\leq 1$ are equivalent.  

Next we will show that (2) implies (3) and (5). 
Suppose $D$ is a digraph isomorphic to $\Lambda_{n_1,n_2,n_3}$.
Then
\[\begin{bmatrix}
O & J & J \\ 
O & J & J \\
O & O & O
\end{bmatrix}\]
is a matrix in $\S_\mathcal{R}(D)$, with the partition $V(D)=V(T_{n_1})\dunion V(K_{n_2})\dunion V(T'_{n_3})$.  Since this matrix has rank $1$, then $\mr_\mathcal{R}(D)\leq 1$.
And by Proposition \ref{crprop} $\gamma_\mathcal{R}(D)\leq 1$.

Since $\mz(D)\leq \mr_{\mathcal{R}}(D)$ and $\mz(D)\leq \gamma_{\mathcal{R}}(D)$ for all digraphs, either of (3) or (5) implies (4).

Finally, Figure~\ref{fig:ForbDig1} shows a zero forcing set for each of the seventeen digraphs in $\mathfrak{F}$, so $\mz(D)=2$ for all $D\in\mathfrak{F}$.  Therefore, (4) implies (1).
\end{proof}


\section*{Acknowledgments}
C.A. Alfaro was partially supported by SNI and CONACyT.


\section*{Appendix: Graphs with at most 6 vertices and $\mz(G)\neq\gamma_\mathbb{R}(G)$}

From the 143 connected graphs with at most 6 vertices, only 21 graphs have $\mz(G)< \gamma_\mathbb{R}(G)$.
For the other graphs, $\mz(G)=\gamma_\mathbb{Z}(G)=\gamma_\mathbb{R}(G)$.

\begin{center}
\begin{tabular}{ccccccc}
&
\begin{tikzpicture}[scale=.3]
		\tikzstyle{every node}=[minimum width=0pt, inner sep=2pt, circle]
			\draw (-2.0,0.0) node[draw] (0) {};
			\draw (-0.61,-1.9) node[draw] (1) {};
			\draw (1.61,-1.17) node[draw] (2) {};
			\draw (1.61,1.17) node[draw] (3) {};
			\draw (-0.61,1.9) node[draw] (4) {};
			\draw  (0) edge (1);
			\draw  (0) edge (2);
			\draw  (0) edge (3);
			\draw  (0) edge (4);
			\draw  (1) edge (2);
			\draw  (1) edge (3);
			\draw  (2) edge (3);
			\draw  (3) edge (4);
		\end{tikzpicture}
&
\begin{tikzpicture}[scale=.3,thick]
		\tikzstyle{every node}=[minimum width=0pt, inner sep=2pt, circle]
			\draw (-2.0,0.0) node[draw] (0) {};
			\draw (-1.0,-1.73) node[draw] (4) {};
			\draw (0.99,-1.73) node[draw] (1) {};
			\draw (2.0,0.0) node[draw] (5) {};
			\draw (1.0,1.73) node[draw] (2) {};
			\draw (-1.0,1.73) node[draw] (3) {};
			\draw  (0) edge (1);
			\draw  (0) edge (2);
			\draw  (0) edge (3);
			\draw  (0) edge (4);
			\draw  (1) edge (2);
			\draw  (1) edge (4);
			\draw  (2) edge (3);
			\draw  (2) edge (4);
			\draw  (2) edge (5);
		\end{tikzpicture}
&
		\begin{tikzpicture}[scale=0.3,thick]
		\tikzstyle{every node}=[minimum width=0pt, inner sep=2pt, circle]
			\draw (-2.0,0.0) node[draw] (5) {};
			\draw (-1.0,-1.73) node[draw] (4) {};
			\draw (0.99,-1.73) node[draw] (2) {};
			\draw (2.0,0.0) node[draw] (3) {};
			\draw (1.0,1.73) node[draw] (1) {};
			\draw (-1.0,1.73) node[draw] (0) {};
			\draw  (0) edge (1);
			\draw  (0) edge (2);
			\draw  (0) edge (4);
			\draw  (1) edge (2);
			\draw  (1) edge (3);
			\draw  (1) edge (4);
			\draw  (2) edge (3);
			\draw  (2) edge (4);
			\draw  (4) edge (5);
		\end{tikzpicture}
&
		\begin{tikzpicture}[scale=.3,thick]
		\tikzstyle{every node}=[minimum width=0pt, inner sep=2pt, circle]
			\draw (-2.0,0.0) node[draw] (5) {};
			\draw (-1.0,-1.73) node[draw] (0) {};
			\draw (0.99,-1.73) node[draw] (2) {};
			\draw (2.0,0.0) node[draw] (3) {};
			\draw (1.0,1.73) node[draw] (4) {};
			\draw (-1.0,1.73) node[draw] (1) {};
			\draw  (0) edge (1);
			\draw  (0) edge (2);
			\draw  (0) edge (3);
			\draw  (0) edge (5);
			\draw  (1) edge (4);
			\draw  (2) edge (3);
			\draw  (3) edge (4);
			\draw  (4) edge (5);
		\end{tikzpicture}
&
\begin{tikzpicture}[scale=.3,thick]
		\tikzstyle{every node}=[minimum width=0pt, inner sep=2pt, circle]
			\draw (-2.0,0.0) node[draw] (5) {};
			\draw (-1.0,-1.73) node[draw] (0) {};
			\draw (0.99,-1.73) node[draw] (2) {};
			\draw (2.0,0.0) node[draw] (3) {};
			\draw (1.0,1.73) node[draw] (4) {};
			\draw (-1.0,1.73) node[draw] (1) {};
			\draw  (0) edge (1);
			\draw  (0) edge (2);
			\draw  (0) edge (3);
			\draw  (0) edge (4);
			\draw  (0) edge (5);
			\draw  (1) edge (4);
			\draw  (2) edge (3);
			\draw  (3) edge (4);
			\draw  (4) edge (5);
		\end{tikzpicture}
&

		\begin{tikzpicture}[scale=.3,thick]
		\tikzstyle{every node}=[minimum width=0pt, inner sep=2pt, circle]
			\draw (-2.0,0.0) node[draw] (1) {};
			\draw (-1.0,-1.73) node[draw] (0) {};
			\draw (0.99,-1.73) node[draw] (2) {};
			\draw (2.0,0.0) node[draw] (3) {};
			\draw (1.0,1.73) node[draw] (5) {};
			\draw (-1.0,1.73) node[draw] (4) {};
			\draw  (0) edge (1);
			\draw  (0) edge (2);
			\draw  (0) edge (3);
			\draw  (1) edge (2);
			\draw  (1) edge (3);
			\draw  (1) edge (4);
			\draw  (2) edge (3);
			\draw  (3) edge (5);
			\draw  (4) edge (5);
		\end{tikzpicture}

\\
\small $\mz$ & \small 2 & \small 3 & \small 3 & \small 3 & \small 3 & \small 3\\
\small $\gamma_\mathbb{Z}$ & \small 3 & \small 4 & \small 4 & \small 4 & \small 4 & \small 4\\
\small $\gamma_\mathbb{R}$ & \small 3 & \small 4 & \small 4 & \small 4 & \small 4 & \small 4\\
\end{tabular}
\end{center}

\begin{center}
\begin{tabular}{ccccccc}
&
		\begin{tikzpicture}[scale=.3,thick]
		\tikzstyle{every node}=[minimum width=0pt, inner sep=2pt, circle]
			\draw (-2.0,0.0) node[draw] (3) {};
			\draw (-1.0,-1.73) node[draw] (1) {};
			\draw (0.99,-1.73) node[draw] (2) {};
			\draw (2.0,0.0) node[draw] (0) {};
			\draw (1.0,1.73) node[draw] (4) {};
			\draw (-1.0,1.73) node[draw] (5) {};
			\draw  (0) edge (1);
			\draw  (0) edge (2);
			\draw  (0) edge (3);
			\draw  (0) edge (4);
			\draw  (1) edge (2);
			\draw  (1) edge (3);
			\draw  (1) edge (4);
			\draw  (2) edge (3);
			\draw  (2) edge (4);
			\draw  (3) edge (4);
			\draw  (3) edge (5);
			\draw  (4) edge (5);
		\end{tikzpicture}
&
		\begin{tikzpicture}[scale=.3,thick]
		\tikzstyle{every node}=[minimum width=0pt, inner sep=2pt, circle]
			\draw (-2.0,0.0) node[draw] (0) {};
			\draw (-1.0,-1.73) node[draw] (1) {};
			\draw (0.99,-1.73) node[draw] (2) {};
			\draw (2.0,0.0) node[draw] (3) {};
			\draw (1.0,1.73) node[draw] (5) {};
			\draw (-1.0,1.73) node[draw] (4) {};
			\draw  (0) edge (1);
			\draw  (0) edge (2);
			\draw  (0) edge (3);
			\draw  (0) edge (4);
			\draw  (1) edge (2);
			\draw  (1) edge (3);
			\draw  (1) edge (4);
			\draw  (2) edge (3);
			\draw  (3) edge (4);
			\draw  (3) edge (5);
			\draw  (4) edge (5);
		\end{tikzpicture}
&
		\begin{tikzpicture}[scale=.3,thick]
		\tikzstyle{every node}=[minimum width=0pt, inner sep=2pt, circle]
			\draw (-2.0,0.0) node[draw] (1) {};
			\draw (-1.0,-1.73) node[draw] (0) {};
			\draw (0.99,-1.73) node[draw] (2) {};
			\draw (2.0,0.0) node[draw] (3) {};
			\draw (1.0,1.73) node[draw] (5) {};
			\draw (-1.0,1.73) node[draw] (4) {};
			\draw  (0) edge (1);
			\draw  (0) edge (2);
			\draw  (0) edge (3);
			\draw  (0) edge (4);
			\draw  (1) edge (2);
			\draw  (1) edge (4);
			\draw  (2) edge (3);
			\draw  (3) edge (4);
			\draw  (3) edge (5);
			\draw  (4) edge (5);
		\end{tikzpicture}
&
		\begin{tikzpicture}[scale=.3,thick]
		\tikzstyle{every node}=[minimum width=0pt, inner sep=2pt, circle]
			\draw (-2.0,0.0) node[draw] (1) {};
			\draw (-1.0,-1.73) node[draw] (0) {};
			\draw (0.99,-1.73) node[draw] (2) {};
			\draw (2.0,0.0) node[draw] (3) {};
			\draw (1.0,1.73) node[draw] (4) {};
			\draw (-1.0,1.73) node[draw] (5) {};
			\draw  (0) edge (1);
			\draw  (0) edge (2);
			\draw  (0) edge (3);
			\draw  (1) edge (2);
			\draw  (1) edge (3);
			\draw  (1) edge (4);
			\draw  (1) edge (5);
			\draw  (2) edge (3);
			\draw  (3) edge (4);
			\draw  (3) edge (5);
			\draw  (4) edge (5);
		\end{tikzpicture}
&
		\begin{tikzpicture}[scale=.3,thick]
		\tikzstyle{every node}=[minimum width=0pt, inner sep=2pt, circle]
			\draw (-2.0,0.0) node[draw] (1) {};
			\draw (-1.0,-1.73) node[draw] (0) {};
			\draw (0.99,-1.73) node[draw] (2) {};
			\draw (2.0,0.0) node[draw] (3) {};
			\draw (1.0,1.73) node[draw] (5) {};
			\draw (-1.0,1.73) node[draw] (4) {};
			\draw  (0) edge (1);
			\draw  (0) edge (2);
			\draw  (0) edge (3);
			\draw  (0) edge (4);
			\draw  (1) edge (3);
			\draw  (1) edge (4);
			\draw  (2) edge (3);
			\draw  (3) edge (5);
			\draw  (4) edge (5);
		\end{tikzpicture}
&
		\begin{tikzpicture}[scale=.3,thick]
		\tikzstyle{every node}=[minimum width=0pt, inner sep=2pt, circle]
			\draw (-2.0,0.0) node[draw] (1) {};
			\draw (-1.0,-1.73) node[draw] (0) {};
			\draw (0.99,-1.73) node[draw] (2) {};
			\draw (2.0,0.0) node[draw] (3) {};
			\draw (1.0,1.73) node[draw] (5) {};
			\draw (-1.0,1.73) node[draw] (4) {};
			\draw  (0) edge (1);
			\draw  (0) edge (2);
			\draw  (0) edge (3);
			\draw  (0) edge (4);
			\draw  (1) edge (3);
			\draw  (1) edge (4);
			\draw  (2) edge (3);
			\draw  (3) edge (4);
			\draw  (3) edge (5);
			\draw  (4) edge (5);
		\end{tikzpicture}

\\
\small $\mz$ & \small 2 & \small 3 & \small 3 & \small 2 & \small 3 & \small 3\\
\small $\gamma_\mathbb{Z}$ & \small 3 & \small 4 & \small 4 & \small 3 & \small 4 & \small 4\\
\small $\gamma_\mathbb{R}$ & \small 3 & \small 4 & \small 4 & \small 3 & \small 4 & \small 4\\
\end{tabular}
\end{center}

\begin{center}
\begin{tabular}{ccccccc}
&
		\begin{tikzpicture}[scale=.3,thick]
		\tikzstyle{every node}=[minimum width=0pt, inner sep=2pt, circle]
			\draw (-2.0,0.0) node[draw] (0) {};
			\draw (-1.0,-1.73) node[draw] (1) {};
			\draw (0.99,-1.73) node[draw] (2) {};
			\draw (2.0,0.0) node[draw] (3) {};
			\draw (1.0,1.73) node[draw] (5) {};
			\draw (-1.0,1.73) node[draw] (4) {};
			\draw  (0) edge (1);
			\draw  (0) edge (2);
			\draw  (0) edge (4);
			\draw  (1) edge (2);
			\draw  (1) edge (3);
			\draw  (1) edge (4);
			\draw  (2) edge (3);
			\draw  (2) edge (5);
			\draw  (3) edge (5);
			\draw  (4) edge (5);
		\end{tikzpicture}
&
		\begin{tikzpicture}[scale=.3,thick]
		\tikzstyle{every node}=[minimum width=0pt, inner sep=2pt, circle]
			\draw (-2.0,0.0) node[draw] (4) {};
			\draw (-1.0,-1.73) node[draw] (0) {};
			\draw (0.99,-1.73) node[draw] (1) {};
			\draw (2.0,0.0) node[draw] (3) {};
			\draw (1.0,1.73) node[draw] (2) {};
			\draw (-1.0,1.73) node[draw] (5) {};
			\draw  (0) edge (1);
			\draw  (0) edge (2);
			\draw  (0) edge (4);
			\draw  (1) edge (2);
			\draw  (1) edge (3);
			\draw  (1) edge (4);
			\draw  (2) edge (3);
			\draw  (2) edge (4);
			\draw  (2) edge (5);
			\draw  (3) edge (5);
			\draw  (4) edge (5);
		\end{tikzpicture}
&
		\begin{tikzpicture}[scale=.3,thick]
		\tikzstyle{every node}=[minimum width=0pt, inner sep=2pt, circle]
			\draw (-2.0,0.0) node[draw] (0) {};
			\draw (-1.0,-1.73) node[draw] (1) {};
			\draw (0.99,-1.73) node[draw] (2) {};
			\draw (2.0,0.0) node[draw] (3) {};
			\draw (1.0,1.73) node[draw] (5) {};
			\draw (-1.0,1.73) node[draw] (4) {};
			\draw  (0) edge (1);
			\draw  (0) edge (2);
			\draw  (0) edge (4);
			\draw  (1) edge (2);
			\draw  (1) edge (3);
			\draw  (1) edge (4);
			\draw  (2) edge (3);
			\draw  (2) edge (4);
			\draw  (2) edge (5);
			\draw  (3) edge (4);
			\draw  (3) edge (5);
			\draw  (4) edge (5);
		\end{tikzpicture}
&
		\begin{tikzpicture}[scale=.3,thick]
		\tikzstyle{every node}=[minimum width=0pt, inner sep=2pt, circle]
			\draw (-2.0,0.0) node[draw] (0) {};
			\draw (-1.0,-1.73) node[draw] (2) {};
			\draw (0.99,-1.73) node[draw] (1) {};
			\draw (2.0,0.0) node[draw] (3) {};
			\draw (1.0,1.73) node[draw] (4) {};
			\draw (-1.0,1.73) node[draw] (5) {};
			\draw  (0) edge (1);
			\draw  (0) edge (2);
			\draw  (0) edge (3);
			\draw  (0) edge (4);
			\draw  (0) edge (5);
			\draw  (1) edge (2);
			\draw  (1) edge (3);
			\draw  (1) edge (4);
			\draw  (2) edge (3);
			\draw  (2) edge (5);
			\draw  (3) edge (4);
			\draw  (3) edge (5);
			\draw  (4) edge (5);
		\end{tikzpicture}
&
		\begin{tikzpicture}[scale=.3,thick]
		\tikzstyle{every node}=[minimum width=0pt, inner sep=2pt, circle]
			\draw (-2.0,0.0) node[draw] (0) {};
			\draw (-1.0,-1.73) node[draw] (3) {};
			\draw (0.99,-1.73) node[draw] (2) {};
			\draw (2.0,0.0) node[draw] (1) {};
			\draw (1.0,1.73) node[draw] (4) {};
			\draw (-1.0,1.73) node[draw] (5) {};
			\draw  (0) edge (1);
			\draw  (0) edge (2);
			\draw  (0) edge (3);
			\draw  (0) edge (5);
			\draw  (1) edge (2);
			\draw  (1) edge (3);
			\draw  (1) edge (4);
			\draw  (2) edge (3);
			\draw  (2) edge (4);
			\draw  (3) edge (5);
			\draw  (4) edge (5);
		\end{tikzpicture}
&
		\begin{tikzpicture}[scale=0.3,thick]
		\tikzstyle{every node}=[minimum width=0pt, inner sep=2pt, circle]
			\draw (-2.0,0.0) node[draw] (5) {};
			\draw (-1.0,-1.73) node[draw] (3) {};
			\draw (0.99,-1.73) node[draw] (1) {};
			\draw (2.0,0.0) node[draw] (2) {};
			\draw (1.0,1.73) node[draw] (4) {};
			\draw (-1.0,1.73) node[draw] (0) {};
			\draw  (0) edge (1);
			\draw  (0) edge (2);
			\draw  (0) edge (3);
			\draw  (0) edge (4);
			\draw  (0) edge (5);
			\draw  (1) edge (2);
			\draw  (1) edge (3);
			\draw  (1) edge (4);
			\draw  (2) edge (3);
			\draw  (2) edge (4);
			\draw  (3) edge (5);
			\draw  (4) edge (5);
		\end{tikzpicture}

\\
\small $\mz$ & \small 3 & \small 3 & \small 3 & \small 2 & \small 2 & \small 2\\
\small $\gamma_\mathbb{Z}$ & \small 4 & \small 4 & \small 4 & \small 3 & \small 3 & \small 3\\
\small $\gamma_\mathbb{R}$ & \small 4 & \small 4 & \small 4 & \small 3 & \small 3 & \small 3\\
\end{tabular}
\end{center}

\begin{center}
\begin{tabular}{ccccccc}
&
		\begin{tikzpicture}[scale=.3,thick]
		\tikzstyle{every node}=[minimum width=0pt, inner sep=2pt, circle]
			\draw (-2.0,0.0) node[draw] (0) {};
			\draw (-1.0,-1.73) node[draw] (1) {};
			\draw (0.99,-1.73) node[draw] (2) {};
			\draw (2.0,0.0) node[draw] (3) {};
			\draw (1.0,1.73) node[draw] (4) {};
			\draw (-1.0,1.73) node[draw] (5) {};
			\draw  (0) edge (1);
			\draw  (0) edge (2);
			\draw  (0) edge (3);
			\draw  (0) edge (4);
			\draw  (0) edge (5);
			\draw  (1) edge (2);
			\draw  (1) edge (3);
			\draw  (1) edge (4);
			\draw  (2) edge (3);
			\draw  (2) edge (4);
			\draw  (3) edge (4);
			\draw  (3) edge (5);
			\draw  (4) edge (5);
		\end{tikzpicture}
&
		\begin{tikzpicture}[scale=.3,thick]
		\tikzstyle{every node}=[minimum width=0pt, inner sep=2pt, circle]
			\draw (-2.0,0.0) node[draw] (1) {};
			\draw (-1.0,-1.73) node[draw] (0) {};
			\draw (0.99,-1.73) node[draw] (2) {};
			\draw (2.0,0.0) node[draw] (3) {};
			\draw (1.0,1.73) node[draw] (4) {};
			\draw (-1.0,1.73) node[draw] (5) {};
			\draw  (0) edge (1);
			\draw  (0) edge (2);
			\draw  (0) edge (3);
			\draw  (1) edge (2);
			\draw  (1) edge (4);
			\draw  (1) edge (5);
			\draw  (2) edge (3);
			\draw  (3) edge (4);
			\draw  (3) edge (5);
			\draw  (4) edge (5);
		\end{tikzpicture}
&
		\begin{tikzpicture}[scale=0.3,thick]
		\tikzstyle{every node}=[minimum width=0pt, inner sep=2pt, circle]
			\draw (-2.0,0.0) node[draw] (0) {};
			\draw (-1.0,-1.73) node[draw] (2) {};
			\draw (0.99,-1.73) node[draw] (1) {};
			\draw (2.0,0.0) node[draw] (3) {};
			\draw (1.0,1.73) node[draw] (4) {};
			\draw (-1.0,1.73) node[draw] (5) {};
			\draw  (0) edge (1);
			\draw  (0) edge (2);
			\draw  (0) edge (4);
			\draw  (0) edge (5);
			\draw  (1) edge (2);
			\draw  (1) edge (3);
			\draw  (1) edge (4);
			\draw  (2) edge (3);
			\draw  (2) edge (5);
			\draw  (3) edge (4);
			\draw  (3) edge (5);
			\draw  (4) edge (5);
		\end{tikzpicture}

\\
\small $\mz$ & \small 2 & \small 2 & \small 2 \\
\small $\gamma_\mathbb{Z}$ & \small 3 & \small 2 & \small 2 \\
\small $\gamma_\mathbb{R}$ & \small 3 & \small 3 & \small 3 \\
\end{tabular}
\end{center}



\end{document}